\documentclass[a4paper,12pt]{amsart}
\usepackage{amsfonts}
\usepackage{amsmath}
\usepackage{amssymb}
\usepackage{mathrsfs}
\usepackage[colorlinks]{hyperref}
\renewcommand\eqref[1]{(\ref{#1})} 
%
%
\setlength{\textwidth}{15.2cm}
\setlength{\textheight}{22.7cm}
\setlength{\topmargin}{0mm}
\setlength{\oddsidemargin}{3mm}
\setlength{\evensidemargin}{3mm}
\setlength{\footskip}{1cm}


\numberwithin{equation}{section}
\theoremstyle{plain}
\newtheorem{thm}{Theorem}[section]

\newtheorem{lem}[thm]{Lemma}

\theoremstyle{definition}
\newtheorem{defn}[thm]{Definition}
\newtheorem{rem}[thm]{Remark}
\newtheorem{ex}[thm]{Example}



\begin{document}
\title[Isoperimetric inequalities for the logarithmic potential]
{Isoperimetric inequalities for the logarithmic potential operator}

\author[Michael Ruzhansky]{Michael Ruzhansky}
\address{
  Michael Ruzhansky:
  \endgraf
  Department of Mathematics
  \endgraf
  Imperial College London
  \endgraf
  180 Queen's Gate, London SW7 2AZ
  \endgraf
  United Kingdom
  \endgraf
  {\it E-mail address} {\rm m.ruzhansky@imperial.ac.uk}
  }
\author[Durvudkhan Suragan]{Durvudkhan Suragan}
\address{
  Durvudkhan Suragan:
  \endgraf
  Institute of Mathematics and Mathematical Modelling
  \endgraf
  125 Pushkin str., 050010 Almaty, Kazakhstan
  \endgraf
  and
  \endgraf
  Department of Mathematics
  \endgraf
  Imperial College London
  \endgraf
  180 Queen's Gate, London SW7 2AZ
  \endgraf
  United Kingdom
  \endgraf
  {\it E-mail address} {\rm d.suragan@imperial.ac.uk}
  }

\thanks{The authors were supported in parts by the EPSRC
 grant EP/K039407/1 and by the Leverhulme Grant RPG-2014-02,
 as well as by the MESRK grant 5127/GF4.}

 \keywords{Logarithmic potential, characteristic numbers, Schatten class, isoperimetric inequality,
 Rayleigh-Faber-Krahn inequality, P{\'o}lya inequality}
 \subjclass{35P99, 47G40, 35S15}

\begin{abstract}
In this paper we prove that the disc is a maximiser of the Schatten $p$-norm of the logarithmic potential operator among all domains of a given measure in $\mathbb R^{2}$, for all even integers $2\leq p<\infty$. We also show that the equilateral triangle has the largest Schatten $p$-norm among all triangles of a given area. For the logarithmic potential operator on bounded open or triangular domains, we also obtain analogies of the Rayleigh-Faber-Krahn or P{\'o}lya inequalities, respectively. The logarithmic potential operator can be related to a nonlocal boundary value problem for the Laplacian, so we obtain isoperimetric inequalities for its eigenvalues as well.
\end{abstract}
     \maketitle
\section{Introduction}
\label{intro}

Let $\Omega\subset \mathbb R^{2}$ be an open bounded set. We consider the logarithmic potential operator on $L^{2}(\Omega)$ defined by
\begin{equation}
\mathcal{L}_{\Omega}f(x):= \int_{\Omega}\frac{1}{2\pi}\ln\frac{1}{|x-y|}f(y)dy,\quad f\in L^{2}(\Omega),\label{3}
\end{equation}
where $\ln$ is the natural logarithm and $|x-y|$ is the standard Euclidean
distance between $x$ and $y$.
Clearly, $\mathcal{L}_{\Omega}$ is a compact and self-adjoint operator.
Therefore, all of its eigenvalues and characteristic numbers are discrete and real.
We recall that the characteristic numbers are the inverses of the eigenvalues. The characteristic numbers of $\mathcal{L}_{\Omega}$ may be enumerated in ascending order of their modulus,
$$|\mu_{1}(\Omega)|\leq|\mu_{2}(\Omega)|\leq...$$
where $\mu_{i}(\Omega)$ is repeated in this series according to its multiplicity. We denote the
corresponding eigenfunctions by $u_{1}, u_{2},...,$ so that for each characteristic number $\mu_{i}$ there is a unique corresponding (normalised) eigenfunction $u_{i}$,
$$u_{i}=\mu_{i}(\Omega)\mathcal{L}_{\Omega}u_{i},\,\,\,\, i=1,2,....$$
\medskip
It is known, see for example Mark Kac \cite{Kac1} (see also \cite{KS1}),
that the equation
$$
u(x)=\mathcal{L}_{\Omega} f(x)=\int_{\Omega}\frac{1}{2\pi}\ln\frac{1}{|x-y|}f(y)dy
$$
is equivalent to the equation
\begin{equation}\label{15}
-\Delta u(x)=f(x), \,\,\,\ x\in\Omega,
\end{equation}
with the nonlocal integral boundary condition
\begin{equation}
-\frac{1}{2}u(x)+\int_{\partial\Omega}\frac{\partial}{\partial n_{y}}\frac{1}{2\pi}\ln\frac{1}{|x-y|}u(y)d S_{y}-
\int_{\partial\Omega}\frac{1}{2\pi}\ln\frac{1}{|x-y|}\frac{\partial u(y)}{\partial n_{y}}d S_{y}=0,\,\,
x\in\partial\Omega,
\label{16}
\end{equation}
where $\frac{\partial}{\partial n_{y}}$ denotes the outer normal
derivative at a point $y$ on the boundary $\partial\Omega$, which is assumed piecewise $C^1$ here.

In general, the boundary value problem \eqref{15}-\eqref{16} has several interesting applications (see, Kac \cite{Kac1,Kac2}, Saito \cite{Sa} and \cite{KS1}).

Spectral properties of the logarithmic potential have been considered in many papers (see \cite{AKL}, \cite{AK}, \cite{BS}, \cite{D12}, \cite{Kac3}, \cite{Tr67}, \cite{Tr69}). In this paper we are interested in isoperimetric inequalities of the logarithmic potential $\mathcal{L}_{\Omega}$, that is also, in isoperimetric inequalities of the nonlocal Laplacian \eqref{15}-\eqref{16}. For a recent general review of
isoperimetric inequalities for the Dirichlet, Neumann and other Laplacians we refer to Benguria, Linde and Loewe in \cite{Ben}.
Isoperimetric inequalities for Schatten norms for double layer potentials have been recently considered
by Miyanishi and Suzuki \cite{MS}.

In Rayleigh's famous book ``Theory of Sound'' (first published in 1877), by using some explicit computation and physical interpretations, he stated that the disc minimises (among all
domains of the same area) the first eigenvalue of the Dirichlet Laplacian. The proof of this conjecture
was obtained about 50 years later, simultaneously (and independently) by G. Faber and E. Krahn. Nowadays, the
Rayleigh-Faber-Krahn inequality has been
established for many other operators; see e.g. \cite{He} for
further references (see also \cite{Ban80} and \cite{Po1}). Among other things, in this paper we also
prove the Rayleigh-Faber-Krahn inequality for the integral operator $\mathcal{L}_{\Omega}$, i.e.
it is proved that the disc is a minimiser of the first eigenvalue of the Laplacian \eqref{15}-\eqref{16} among
all domains of a given measure in $\mathbb R^{2}$.

By using the Feynman-Kac formula and spherical rearrangement Luttinger \cite{Lu} proved that the disc $D$ is a maximiser
of the partition function of the Dirichlet Laplacian among all domains of the same area as $D$ for all positive values of time, i.e.
$$
\sum_{i=1}^{\infty}\exp(-t\mu_{i}^{\mathcal{D}}(\Omega))\leq \sum_{i=1}^{\infty}\exp(-t\mu_{i}^{\mathcal{D}}(D)),\quad \forall t>0 ,\,\,|\Omega|=|D|,
$$
where $\mu_{i}^{\mathcal{D}}, i=1,2,\ldots,$ are the characteristic numbers of the Dirichlet Laplacian.
From here by using the Mellin transform one obtains
\begin{equation}\sum_{i=1}^{\infty}\frac{1}{[\mu_{i}^{\mathcal{D}}(\Omega)]^{p}}\leq \sum_{i=1}^{\infty}\frac{1}{[\mu_{i}^{\mathcal{D}}(D)]^{p}},\quad
|\Omega|=|D|, \label{eq5}\end{equation}
when $p>1$, $\Omega\subset \mathbb R^{2}$.
We prove an analogy of this Luttinger's inequality for the integral operator $\mathcal{L}_{\Omega}$. In our
note \cite{Ruzhansky-Suragan:UMN} we obtained similar results for convolution type integral operators with positive nonincreasing kernels. In the present setting the main difficulty arises from the fact that the logarithmic potential is not positive and that we can not use the Brascamp-Lieb-Luttinger type rearrangement inequalities directly.

In Section \ref{SEC:result} we present main results of this paper.
Their proofs will be given in Section \ref{SEC:proof} and Section \ref{sec:2}.
In Section 5 we discuss shortly about isoperimetric inequalities for polygons and show that the Schatten $p$-norm is maximised on the equilateral triangle centred at the origin among all triangles of a given area.

\medskip
The authors would like to thank Grigori Rozenblum for comments.

\section{Main results and examples}
\label{SEC:result}

Let $H$ be a separable Hilbert space. By $\mathcal{S}^{\infty}(H)$
we denote the space of compact operators $P:H\rightarrow H$.
Recall that the singular values $\{s_{n}\}$ of $P\in \mathcal{S}^{\infty}(H)$
are the eigenvalues of the positive operator $(P^{*}P)^{1/2}$ (see e.g. \cite{GK}). The Schatten $p$-classes are defined as
$$
\mathcal{S}^{p}(H):=\{P \in \mathcal{S}^{\infty}(H): \{s_{n}\}\in \ell^{p}\},\quad 1\leq p<\infty.
$$
In $\mathcal{S}^{p}(H)$ the Schatten $p$-norm of the operator $P$ is defined by
\begin{equation}
\| P\|_{p}:=\left(\sum_{n=1}^{\infty}s_{n}^{p}\right)^{\frac{1}{p}}, \quad 1\leq p <\infty.\label{1}
\end{equation}
For $p=\infty$, we can set
$$
\| P\|_{\infty}:=\| P\|
$$
to be the operator norm of $P$ on $H$. As outlined in the introduction, we assume that $\Omega\subset \mathbb R^{2}$
is an open bounded set and we consider the logarithmic potential operator
on $L^{2}(\Omega)$ of the form
\begin{equation}
\mathcal{L}_{\Omega}f(x)=\int_{\Omega}\frac{1}{2\pi}\ln\frac{1}{|x-y|}f(y)dy,\quad f\in L^{2}(\Omega).
\label{n16}
\end{equation}
We also assume that the operator $\mathcal{L}_{\Omega}$ is positive:

\begin{rem}\label{REM:positivity}
In Landkof \cite[Theorem 1.16, p.80]{Lan66} the positivity of the operator $\mathcal{L}_\Omega$ is proved in domains $\overline{\Omega}\subset U,$ where $U$ is the unit disc. In general, $\mathcal{L}_\Omega$ is not a positive operator. For any bounded open domain $\Omega$ the logarithmic potential operator $\mathcal{L}_\Omega$ can have at most one negative eigenvalue, see
Troutman \cite{Tr67} (see also Kac \cite{Kac3}).
\end{rem}

Note that for positive self-adjoint operators the singular values equal the eigenvalues.
It is known that $\mathcal{L}_{\Omega}$ is a Hilbert-Schmidt operator.
By $|\Omega|$ we will denote the Lebesque measure of $\Omega$.

\begin{thm} \label{THM:main}
Let $D$ be a disc centred at the origin.
Then
\begin{equation}
\|\mathcal{L}_{\Omega}\|_{p}\leq  \|\mathcal{L}_{D}\|_{p}
\label{3}
\end{equation}
for any even integer $2\leq p<\infty$ and any bounded open domain $\Omega$ with $|\Omega|=|D|.$
\end{thm}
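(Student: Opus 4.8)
The plan is to reduce the statement to a rearrangement inequality for an explicit multiple integral and then to invoke a Brascamp--Lieb--Luttinger (BLL) type inequality. The first step is to exploit that $p$ is an \emph{even} integer. Since $\mathcal{L}_\Omega$ is compact and self-adjoint, its singular values are $s_n=|\lambda_n(\Omega)|$, and for even $p$ one has $s_n^p=|\lambda_n|^p=\lambda_n^p$. Hence
\[
\|\mathcal{L}_\Omega\|_p^p=\sum_n \lambda_n(\Omega)^p=\mathrm{Tr}\,(\mathcal{L}_\Omega^p)
=\int_{\Omega^p}\prod_{i=1}^{p}\frac{1}{2\pi}\ln\frac{1}{|x_i-x_{i+1}|}\,dx_1\cdots dx_p,
\]
with the cyclic convention $x_{p+1}=x_1$, the last equality being the standard expression for the trace of an iterated kernel. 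Thus the assertion becomes the claim that this cyclic integral, with the factors $\chi_\Omega(x_i)$ inserted, is maximised among sets of fixed measure by the disc $D$. The natural tool for such cyclic kernel integrals is the BLL rearrangement inequality: if the kernel $\tfrac{1}{2\pi}\ln\tfrac{1}{|z|}$ were nonnegative and radially nonincreasing, then applying BLL (as in our companion note \cite{Ruzhansky-Suragan:UMN}) to the product over the $p$ functions $\chi_\Omega(x_i)$ and the $p$ kernel factors would immediately show that replacing each $\chi_\Omega$ by its symmetric decreasing rearrangement $\chi_{\Omega^*}=\chi_D$ can only increase the integral, which is precisely the desired inequality.

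The main obstacle, as already flagged in the introduction, is that the logarithmic kernel changes sign. To isolate the positive structure I would use the layer-cake representation
\[
\frac{1}{2\pi}\ln\frac{1}{|z|}=\int_0^{R}\frac{1}{2\pi t}\,\chi_{\{|z|<t\}}(z)\,dt-\frac{\ln R}{2\pi},\qquad 0<|z|\le R,
\]
where, under the standing positivity hypothesis (which by Landkof \cite{Lan66} holds e.g.\ when $\overline{\Omega}\subset U$), one may fix a single $R$ valid for both $\Omega$ and $D$. Each kernel $\chi_{\{|z|<t\}}$ is nonnegative and symmetric decreasing, so the superposition term is BLL-admissible, and its contribution to the cyclic integral is indeed maximised by $D$. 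The entire difficulty is thereby concentrated in the additive constant $-\tfrac{\ln R}{2\pi}$: it makes $\mathcal{L}_\Omega$ differ from a convolution operator with nonnegative symmetric decreasing kernel by the rank-one operator $f\mapsto\tfrac{\ln R}{2\pi}\int_\Omega f$, and expanding the $p$-fold product produces lower-order correction terms with alternating signs, to which BLL cannot be applied directly.

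The crux of the proof is therefore to control these correction terms, and this is where I expect the positivity assumption to be essential: it excludes the elongated configurations for which the far-field (negative) part of the kernel would otherwise make $\mathrm{Tr}(\mathcal{L}_\Omega^p)$ exceed $\mathrm{Tr}(\mathcal{L}_D^p)$. Concretely, I would attempt one of two routes. First, approximate the logarithm by the Riesz kernels $\tfrac{1}{2\pi s}\bigl(|z|^{-s}-1\bigr)\to\tfrac{1}{2\pi}\ln\tfrac{1}{|z|}$ as $s\to0^+$, apply the BLL inequality to each strictly positive, symmetric decreasing kernel $|z|^{-s}$, and pass to the limit while bookkeeping the constant-subtraction terms. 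Second, regroup the alternating correction terms as traces of shorter kernel chains and show, using $\mathcal{L}_\Omega\ge 0$ together with the rank-one nature of the perturbation, that their net contribution has the favourable sign. The delicate limit passage (respectively, the sign bookkeeping for the correction terms), which must genuinely use the positivity of $\mathcal{L}_\Omega$ rather than rearrangement alone, is the step I expect to be the true technical heart of the argument; the reduction to the trace of $\mathcal{L}_\Omega^p$ and the BLL treatment of the positive part are comparatively routine.
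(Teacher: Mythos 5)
Your reduction of the claim to the cyclic iterated--kernel integral,
\[
\|\mathcal{L}_{\Omega}\|_{p}^{p}=\sum_{j=1}^{\infty}\frac{1}{\mu_{j}^{p}(\Omega)}
=\int_{\Omega}\cdots\int_{\Omega}\frac{1}{2\pi}\ln\frac{1}{|y_{1}-y_{2}|}\cdots\frac{1}{2\pi}\ln\frac{1}{|y_{p}-y_{1}|}\,dy_{1}\cdots dy_{p}
\quad(p\ \text{even}),
\]
is exactly the first step of the paper's proof, and your diagnosis that the entire difficulty sits in the sign change of the logarithmic kernel is also correct. But at precisely that point the argument stops: you name two candidate strategies (a Riesz--kernel limit with ``bookkeeping'', or a regrouping of the $2^{p}$ correction terms coming from expanding $\prod_{i}(k-c)$ using positivity of the operator) and explicitly defer what you yourself call the true technical heart to future work. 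Neither route is carried out, and the second, as stated, does not close: the expansion produces terms weighted by $(-c)^{p-|S|}$, and for $p-|S|$ odd the Brascamp--Lieb--Luttinger inequality points in the wrong direction --- that is the problem to be solved, not a step toward solving it. This is a genuine gap. A second, independent defect is that you invoke a ``standing positivity hypothesis'' on $\mathcal{L}_{\Omega}$. Theorem \ref{THM:main} makes no such hypothesis; the paper stresses that for even $p$ positivity is not needed (this is exactly what separates Theorem \ref{THM:main} from Theorem \ref{THM:main2}), so any argument that genuinely requires $\mathcal{L}_{\Omega}\geq 0$ proves a strictly weaker statement.

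For comparison, the paper closes the gap by a purely rearrangement--theoretic statement, Lemma \ref{lem:n1}, modelled on Lemma 2 of Carlen--Loss rather than on operator positivity. One writes $\frac{1}{2\pi}\ln\frac{1}{r}=h_{1}(r)+h_{2}(r)$, where $h_{1}=f-f_{\infty}$ is \emph{positive} and strictly decreasing ($f$ being an explicit auxiliary function that equals $\frac{1}{2\pi}\ln\frac1r$ up to $r_{0}$ and then decays to a finite limit $f_{\infty}$) and $h_{2}$ is decreasing but not positive. The terms in the expanded cyclic integral involving only $h_{1}$ are handled by Brascamp--Lieb--Luttinger directly; the terms involving $h_{2}$ are handled by truncation, $q_{R}:=h_{2}-h_{2}(R)$ on $[0,R]$ and $0$ beyond, which is nonnegative and nonincreasing, so Brascamp--Lieb--Luttinger applies to it, after which one lets $R\to\infty$ and tracks the additive constants through quantities such as $\int_{\Omega}\int_{\Omega}h_{1}$, which themselves obey the rearrangement inequality. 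Your layer--cake splitting of the kernel into (nonnegative decreasing) minus (constant) is close in spirit to this decomposition, but the missing ingredient is precisely this truncate--and--pass--to--the--limit treatment of the non-positive summand; without it, or some completed substitute, the proposal does not prove the theorem.
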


Note that for even integers $p$ we do not need to assume
the positivity of the logarithmic potential operator.
For odd integers we have the following:

\begin{thm}\label{THM:main2}
Let $D$ be a disc centred at the origin and let $\Omega$
be a bounded open domain with $|\Omega|=|D|.$
Assume that the logarithmic potential operator is positive for
$\Omega$ and $D$.
Then
\begin{equation}
\|\mathcal{L}_{\Omega}\|_{p}\leq  \|\mathcal{L}_{D}\|_{p}
\label{3-1}
\end{equation}
for any integer $2\leq p<\infty$.
\end{thm}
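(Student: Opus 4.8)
The plan is to reduce inequality \eqref{3-1} to a comparison of traces of powers and then to extend the even-order result of Theorem~\ref{THM:main} to odd orders. First I would use positivity: since $\mathcal{L}_{\Omega}$ and $\mathcal{L}_{D}$ are positive and self-adjoint, their singular values coincide with their (nonnegative) eigenvalues, so for every integer $p\ge2$
\begin{equation}
\|\mathcal{L}_{\Omega}\|_{p}^{p}=\sum_{n}s_{n}(\Omega)^{p}=\sum_{n}\lambda_{n}(\Omega)^{p}=\operatorname{Tr}\bigl(\mathcal{L}_{\Omega}^{p}\bigr),
\label{PL:trace}
\end{equation}
and similarly for $D$. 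Hence \eqref{3-1} is equivalent to $\operatorname{Tr}(\mathcal{L}_{\Omega}^{p})\le\operatorname{Tr}(\mathcal{L}_{D}^{p})$, and it suffices to prove this trace inequality for every integer $p\ge2$. For even $p$ it is exactly Theorem~\ref{THM:main} (no positivity is needed there, since each $\lambda_{n}^{p}$ is already nonnegative), so the only genuinely new case is odd $p\ge3$.

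Next I would represent the trace as a cyclic multiple integral. As $\mathcal{L}_{\Omega}$ is Hilbert--Schmidt, $\mathcal{L}_{\Omega}^{p}$ is trace class for $p\ge2$, and
\begin{equation}
\operatorname{Tr}\bigl(\mathcal{L}_{\Omega}^{p}\bigr)=\Bigl(\frac{1}{2\pi}\Bigr)^{p}\int_{\Omega}\!\cdots\!\int_{\Omega}\ \prod_{i=1}^{p}\ln\frac{1}{|x_{i}-x_{i+1}|}\,dx_{1}\cdots dx_{p},\qquad x_{p+1}:=x_{1}.
\label{PL:multi}
\end{equation}
The goal then becomes to show that this functional of the domain, over sets of fixed measure, is maximised by the centred disc. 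For even $p$ this maximality is the content of Theorem~\ref{THM:main}; for odd $p$ I would aim to prove the same maximality for \eqref{PL:multi} directly.

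The hard part will be precisely this rearrangement for odd $p$, and it is where the non-positivity of the kernel obstructs a direct argument, as flagged in the introduction. The radial profile $z\mapsto\frac{1}{2\pi}\ln\frac{1}{|z|}$ is decreasing but changes sign at $|z|=1$, so the Brascamp--Lieb--Luttinger inequality does not apply to \eqref{PL:multi} as written, and, unlike the even case, the integrand cannot be recast as a square. The route I would try is to replace $\ln\frac{1}{|z|}$ by the shifted radial profile $\ln\frac{1}{|z|}+c$, chosen nonnegative and decreasing on the range of distances that occur, to expand the product, and to bound each resulting term by rearrangement; the purely constant contributions are identical for $\Omega$ and $D$ since $|\Omega|=|D|$ and cancel in the comparison. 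The main obstacle I anticipate is that the mixed terms of intermediate degree carry alternating signs, so a one-directional rearrangement bound does not close the estimate by itself; controlling these terms — by grouping them, by invoking the positive-kernel inequalities of \cite{Ruzhansky-Suragan:UMN}, or by a truncation and limiting argument on the kernel — is the crux. Once $\operatorname{Tr}(\mathcal{L}_{\Omega}^{p})\le\operatorname{Tr}(\mathcal{L}_{D}^{p})$ is secured for odd $p$, identity \eqref{PL:trace} turns it into \eqref{3-1}; positivity is indispensable here, for without it a single negative eigenvalue (Remark~\ref{REM:positivity}) would make $\sum_{n}\lambda_{n}^{p}$ differ from $\|\mathcal{L}_{\Omega}\|_{p}^{p}=\sum_{n}|\lambda_{n}|^{p}$ when $p$ is odd.
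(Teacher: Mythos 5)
Your reduction is exactly the paper's: positivity of $\mathcal{L}_{\Omega}$ and $\mathcal{L}_{D}$ identifies singular values with eigenvalues, so the Schatten norm becomes $\operatorname{Tr}(\mathcal{L}_{\Omega}^{p})=\sum_{j}\mu_{j}(\Omega)^{-p}$, which by the bilinear expansion of the iterated kernel (the paper's \eqref{eq15}) equals the cyclic multiple integral of the logarithmic kernel. You also correctly observe that positivity is used only at this identification step, and that for odd $p$ a single negative eigenvalue would break it. Up to here the proposal matches the paper.

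The genuine gap is that you never prove the rearrangement inequality for the cyclic integral when $p$ is odd — you name it as ``the crux'' and list candidate strategies (constant shift, grouping, truncation) without carrying any of them out, and your own diagnosis of the constant-shift route is correct: after expanding $\prod_{i}(K+c)$ the intermediate terms are sign-indefinite and do not cancel between $\Omega$ and $D$, so the argument does not close. In the paper this step is Lemma \ref{lem:n1}, inequality \eqref{eqn16}, which is stated and proved for \emph{every} integer $p\geq 2$, not just even $p$; consequently \eqref{eq17} already holds for all integers $p\geq 2$, and the proof of the present theorem is a one-line appeal to it. The device that makes Lemma \ref{lem:n1} work is the Carlen--Loss decomposition $\frac{1}{2\pi}\ln\frac{1}{r}=h_{1}(r)+h_{2}(r)$, where $h_{1}=f-f_{\infty}$ is a \emph{nonnegative} strictly decreasing modification of the logarithm and $h_{2}$ is decreasing but sign-changing; the pure $h_{1}$ blocks are handled by Brascamp--Lieb--Luttinger directly, and the blocks containing $h_{2}$ are handled by the truncation $q_{R}(r)=h_{2}(r)-h_{2}(R)$ for $r\leq R$ (which is nonnegative and nonincreasing) followed by a monotone-convergence limit in $R$. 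This is the missing ingredient in your proposal: the difficulty you locate in the parity of $p$ actually sits entirely inside Lemma \ref{lem:n1}, which is uniform in $p$, and without supplying that lemma (or an equivalent) your argument for odd $p$ is incomplete.
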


Let us give several examples calculating explicitly values of the right hand side of \eqref{3} for different values of $p$.
\begin{ex}
Let $D\equiv U$ be the unit disc.
Then by Theorem \ref{THM:main} we have
\begin{equation}
\|\mathcal{L}_{\Omega}\|_{p}\leq
\|\mathcal{L}_{U}\|_{p}=
\left(\sum_{m=1}^{\infty}\frac{3}{j_{0,m}^{2p}}
+\sum_{l=1}^{\infty}\sum_{m=1}^{\infty}\frac{2}{j_{l,m}^{2p}}
\right)^{\frac{1}{p}},
\label{ex2}
\end{equation}
for any even $2\leq p < \infty$ and any bounded open domain $\Omega$ with $|\Omega|=|D|.$
Here $j_{km}$ denotes the $m^{th}$ positive zero of the Bessel function  $J_{k}$ of the first kind of order $k$.
\end{ex}
The right hand sight of the formula \eqref{ex2} can be confirmed by a direct calculation of the logarithmic potential eigenvalues in the unit disc, see Theorem 3.1 in \cite{AKL}.

\medskip
We also obtain the following Rayleigh-Faber-Krahn inequality when $p=\infty$:

\begin{thm} \label{THM:1} The disc $D$ is a minimiser of the characteristic number of the logarithmic potential $\mathcal{L}_\Omega$ with the smallest modulus among all domains of a given measure, that is,
$$\| \mathcal{L}_\Omega\|\leq \| \mathcal{L}_D\|$$
for an arbitrary bounded open domain $\Omega\subset \mathbb R^{2}$ with $|\Omega|=|D|.$
\end{thm}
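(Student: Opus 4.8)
The plan is to obtain this $p=\infty$ statement by passing to the limit $p\to\infty$ in the Schatten-norm inequality already established in Theorem \ref{THM:main}. The crucial analytic fact I would invoke is that for any compact operator $P$ the operator norm is recovered as the limit of its Schatten $p$-norms,
$$\lim_{p\to\infty}\|P\|_{p}=\|P\|=\|P\|_{\infty}.$$
This is elementary. Arranging the singular values in decreasing order $s_{1}\geq s_{2}\geq\dots\geq 0$ with $s_{1}=\|P\|>0$, one has $\|P\|_{p}=s_{1}\bigl(\sum_{n}(s_{n}/s_{1})^{p}\bigr)^{1/p}$. Since $\mathcal{L}_{\Omega}$ is Hilbert--Schmidt the sum converges for every $p\geq 2$, and as $p\to\infty$ each ratio with $s_{n}<s_{1}$ contributes a vanishing term while the bracketed sum tends to the (finite) multiplicity of $s_{1}$; raising this to the power $1/p$ then forces the factor to $1$, so that $\|P\|_{p}\to s_{1}=\|P\|$.

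With this in hand the argument is short. By Theorem \ref{THM:main} we have $\|\mathcal{L}_{\Omega}\|_{p}\leq\|\mathcal{L}_{D}\|_{p}$ for every even integer $p\geq 2$ whenever $|\Omega|=|D|$. Restricting to the sequence of even integers $p=2,4,6,\dots$, which still tends to infinity, and letting $p\to\infty$ on both sides, the limit relation above yields
$$\|\mathcal{L}_{\Omega}\|=\lim_{p\to\infty}\|\mathcal{L}_{\Omega}\|_{p}\leq\lim_{p\to\infty}\|\mathcal{L}_{D}\|_{p}=\|\mathcal{L}_{D}\|,$$
which is the asserted inequality. It remains only to translate this into the language of characteristic numbers: since $\mathcal{L}_{\Omega}$ is compact and self-adjoint, its operator norm equals the modulus of its largest eigenvalue, so $\|\mathcal{L}_{\Omega}\|=1/|\mu_{1}(\Omega)|$, where $\mu_{1}(\Omega)$ is the characteristic number of smallest modulus. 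The inequality $\|\mathcal{L}_{\Omega}\|\leq\|\mathcal{L}_{D}\|$ is therefore equivalent to $|\mu_{1}(\Omega)|\geq|\mu_{1}(D)|$, i.e.\ the disc minimises the characteristic number of smallest modulus, as claimed.

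I expect no genuine obstacle here, since all the substantive work already resides in Theorem \ref{THM:main}; the only points requiring care are the justification of the limit $\|P\|_{p}\to\|P\|$ and the observation that passing through even integers alone suffices, both of which are routine. It is worth emphasising that, because Theorem \ref{THM:main} holds for even $p$ with no positivity requirement on $\mathcal{L}_{\Omega}$, this limiting procedure delivers the Rayleigh--Faber--Krahn inequality for the logarithmic potential operator without assuming its positivity, in contrast to the odd-$p$ situation of Theorem \ref{THM:main2}.
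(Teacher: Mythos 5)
Your proof is correct, but it takes a genuinely different route from the paper. The paper proves Theorem \ref{THM:1} directly and independently of Theorem \ref{THM:main}: it first shows (Lemma \ref{lem:1}) that the characteristic number of smallest modulus is simple with a nonnegative eigenfunction $u_{1}$, then applies the symmetric-decreasing rearrangement to $u_{1}$, a Carlen--Loss-type rearrangement inequality for the trilinear quadratic form of $\mathcal{L}_{\Omega}^{2}$, and the variational characterisation $\frac{1}{\mu_{1}^{2}}=\sup_{f\neq 0}\frac{(\mathcal{L}_{\Omega}^{2}f,f)}{\|f\|^{2}}$ to conclude $\mu_{1}^{2}(\Omega)\geq\mu_{1}^{2}(D)$. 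You instead obtain the statement as the limit $p\to\infty$ (along even integers) of the Schatten inequalities of Theorem \ref{THM:main}, using $\lim_{p\to\infty}\|P\|_{p}=\|P\|$; your justification of that limit is sound precisely because $\mathcal{L}_{\Omega}$ is Hilbert--Schmidt, so the tail sum is uniformly controlled by the $p=2$ sum, and there is no circularity since Theorem \ref{THM:main} is proved without reference to Theorem \ref{THM:1}. What each approach buys: yours is shorter and, as you note, makes explicit that no positivity assumption is needed (though the paper's argument also avoids it by working with the automatically positive $\mathcal{L}_{\Omega}^{2}$); but it imports the full multilinear Brascamp--Lieb--Luttinger machinery of Lemma \ref{lem:n1} for all even $p$, whereas the paper's argument needs only the single weighted trilinear inequality and, as a by-product, delivers the simplicity of $\mu_{1}$ and the sign of $u_{1}$, information that is reused in the proof of Theorem \ref{THM:2}. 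Your limiting trick would likewise derive the operator-norm statement for triangles from Theorem \ref{THM:tri}.
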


\begin{ex}
Let $D\equiv U$ be the unit disc.
Then by Theorem \ref{THM:1} we have
\begin{equation}
\| \mathcal{L}_\Omega\|
\leq \| \mathcal{L}_U\| =\frac{1}{j_{01}^{2}}
\label{ex3}
\end{equation}
for any bounded open domain $\Omega$ with $|\Omega|=|D|.$ Here $\|\cdot\|$ is the operator norm on the space $L^2$.
\end{ex}
From Corollary 3.2 in \cite{AKL} we calculate explicitly the operator norm in the right hand sight of
\eqref{ex3}.

\section{Proof of Theorem \ref{THM:1}}
\label{sec:2}

Let us first prove Theorem \ref{THM:1}. To do it we first prove the following:

\begin{lem}\label{lem:1}
The characteristic number $\mu_{1}$ of the logarithmic potential $\mathcal{L}_\Omega$ with the smallest modulus is simple,
and the corresponding eigenfunction $u_{1}$ can be chosen nonnegative.
\end{lem}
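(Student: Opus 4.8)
The plan is to treat $\mathcal{L}_\Omega$ as a compact, self-adjoint, positive operator and to run a Perron--Frobenius (Jentzsch) type argument for its largest eigenvalue $\lambda_1=1/\mu_1$, which by positivity coincides with the operator norm. First I would record the variational characterisation
\[
\lambda_1=\max_{\|u\|_{L^2(\Omega)}=1}\langle \mathcal{L}_\Omega u,u\rangle,
\]
noting that, by compactness, the maximum is attained at some real $u_1$, which is then an eigenfunction for $\lambda_1$ (equivalently, for the smallest characteristic number $\mu_1$). Simplicity and the sign of $u_1$ are the two properties to be extracted from this maximiser.

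For the sign, the textbook move is to replace $u_1$ by $|u_1|$ and show it is still a maximiser. Writing $u_1=u_1^+-u_1^-$ one computes
\[
\langle \mathcal{L}_\Omega|u_1|,|u_1|\rangle-\langle \mathcal{L}_\Omega u_1,u_1\rangle=4\langle \mathcal{L}_\Omega u_1^+,u_1^-\rangle,
\]
so it would suffice to prove $\langle \mathcal{L}_\Omega u_1^+,u_1^-\rangle\geq 0$, i.e. that the off-diagonal coupling of the positive and negative parts is nonnegative. For a strictly positive kernel this is immediate and forces $u_1^+u_1^-\equiv 0$. This is exactly the step I expect to be the \emph{main obstacle}: the logarithmic kernel $\frac{1}{2\pi}\ln\frac{1}{|x-y|}$ is negative for $|x-y|>1$, so $\mathcal{L}_\Omega$ is not positivity-improving and the displayed inequality is not automatic. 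Moreover, mere positivity of the quadratic form is not enough, since polarisation only yields the Cauchy--Schwarz bound $\langle \mathcal{L}_\Omega u_1^+,u_1^-\rangle^2\leq\langle \mathcal{L}_\Omega u_1^+,u_1^+\rangle\langle \mathcal{L}_\Omega u_1^-,u_1^-\rangle$ and no information on the sign. To get around this I would replace the naive symmetrisation by a positivity-improving property of an operator attached to $\mathcal{L}_\Omega$: using the Kac reformulation $u_1=\mu_1\mathcal{L}_\Omega u_1\Leftrightarrow-\Delta u_1=\mu_1 u_1$, together with the splitting of the logarithmic kernel into the Dirichlet Green function (which has a strictly positive, positivity-improving kernel) and its harmonic correction, and feeding in the standing positivity hypothesis on $\mathcal{L}_\Omega$, to produce a nonnegative ground state. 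A parallel route I would attempt is a nodal-domain argument for $-\Delta u_1=\mu_1 u_1$; the difficulty there is that nodal domains meeting $\partial\Omega$ inherit the nonlocal condition \eqref{16} rather than a Dirichlet condition, so $\mu_1$ cannot be compared directly with the first Dirichlet eigenvalue.

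Once a strictly positive (a.e.) ground state $u_1>0$ has been secured, simplicity follows by the standard argument: if $\lambda_1$ admitted a second, $L^2$-orthogonal eigenfunction $\psi$, then for a suitable real $t$ the combination $u_1-t\psi$ would be a nonnegative eigenfunction for $\lambda_1$ vanishing on a set of positive measure, contradicting the strict positivity furnished by the positivity-improving property. Hence $\mu_1$ is simple and $u_1$ may be chosen nonnegative, which is the assertion of the lemma. The entire proof therefore hinges on upgrading the assumed positivity of $\mathcal{L}_\Omega$ to a positivity-improving statement for the relevant operator; that is the single nontrivial ingredient, and the one on which I would concentrate the effort.
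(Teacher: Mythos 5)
You have correctly isolated the crux --- the logarithmic kernel changes sign, so the naive replacement of $u_1$ by $|u_1|$ does not obviously increase the quadratic form $\langle \mathcal{L}_\Omega u,u\rangle$ --- but your proposal stops at naming the obstacle and offering two speculative routes (a Green-function splitting of the kernel, or a nodal-domain argument for the nonlocal boundary value problem) without carrying either one out. As written, this is a genuine gap: there is no proof of the sign statement, and since your simplicity argument feeds on an (unproved) almost-everywhere strict positivity of the ground state, that part is also left hanging. Neither of your suggested routes is what the paper does, and the nodal-domain route in particular runs into exactly the difficulty you acknowledge.

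The paper's device is different and more elementary: it works with the \emph{square} $\mathcal{L}_\Omega^2$ rather than with $\mathcal{L}_\Omega$. The operator $\mathcal{L}_\Omega^2$ is automatically nonnegative (so no positivity hypothesis on $\mathcal{L}_\Omega$ is needed for this lemma), its largest eigenvalue is $1/\mu_1^2$, and its iterated kernel $K_2(x,y)=\int_\Omega \frac{1}{2\pi}\ln\frac{1}{|x-z|}\,\frac{1}{2\pi}\ln\frac{1}{|z-y|}\,dz$ is strictly positive on the diagonal, since $K_2(x_0,x_0)=\int_\Omega |k(x_0,z)|^2dz>0$; by continuity it stays positive for $x,y$ in a small ball $U(x_0,\rho)$. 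If $u_1$ changed sign, one picks $x_0$ near which it does so and flips the sign of $u_1$ \emph{only on} $U(x_0,\rho)$, leaving it untouched elsewhere; the local positivity of $K_2$ then yields a test function with a strictly larger Rayleigh quotient for $\mathcal{L}_\Omega^2$, contradicting the variational principle. This local modification is precisely the move that circumvents the failure of global kernel positivity that blocks your computation $\langle \mathcal{L}_\Omega|u_1|,|u_1|\rangle-\langle \mathcal{L}_\Omega u_1,u_1\rangle=4\langle \mathcal{L}_\Omega u_1^+,u_1^-\rangle$. Simplicity is then obtained not from positivity-improvement but from the weaker statement that \emph{every} eigenfunction for $\mu_1$ is of one sign: a second independent eigenfunction $v_1$ would make $u_1+cv_1$ an eigenfunction of one sign for every real $c$, which is impossible. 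If you want to salvage your outline, replace $\mathcal{L}_\Omega$ by $\mathcal{L}_\Omega^2$ and localise the sign flip; the rest of your plan then goes through with the weaker simplicity argument.
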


\begin{proof}
The eigenfunctions of the logarithmic potential $\mathcal{L}_\Omega$
may be chosen to be real as its kernel is real. First let us prove that $u_{1}$ cannot change sign in the domain
$\Omega$, that is,
$$u_{1}(x)u_{1}(y)=|u_{1}(x)u_{1}(y)|,\,\,x,\,y\in\Omega.$$

Indeed, in the opposite case, by virtue of the continuity of the function
$u_{1}(x)$, there would be neighborhoods $U(x_{0},r)\subset \Omega$ such that

$$|u_{1}(x)u_{1}(y)|> u_{1}(x)u_{1}(y),\quad x,y\in U(x_{0},r)\subset\Omega.$$
On the other hand we have
\begin{equation}
\int_{\Omega}\frac{1}{2\pi}\ln\frac{1}{|x_{0}-z|}
\frac{1}{2\pi}\ln\frac{1}{|z-x_{0}|}dz>0, \quad x_{0}\in\Omega.
\end{equation}
From here by continuity it is simple to check that there exists $\rho>0$ such that
\begin{equation}\label{n0}
\int_{\Omega}\frac{1}{2\pi}\ln\frac{1}{|x-z|}
\frac{1}{2\pi}\ln\frac{1}{|z-y|}dz>0, \quad x,y\in U(x_{0},\rho)\subset U(x_{0},r).
\end{equation}
Now let us introduce a new function
\begin{equation}
\widetilde{u}_{1}(x):=\left\{
\begin{array}{ll}
    |u_{1}(x)|,\,\,x\in U(x_{0},\rho),\\
    u_{1}(x), \,\,x\in \Omega\backslash U(x_{0},\rho). \\
\end{array}
\right.
\label{EQ:fs}
\end{equation}
Then we obtain
\begin{multline} \label{n2}
\frac{(\mathcal{L}^{2}_\Omega\widetilde{u}_{1},\,\widetilde{u}_{1})}{\| \widetilde{u}_{1}\|^{2}}=
\frac{1}{\| \widetilde{u}_{1}\|^{2}}\int_{\Omega}
\int_{\Omega}\int_{\Omega}\frac{1}{2\pi}\ln\frac{1}
{|x-z|}\frac{1}{2\pi}\ln\frac{1}{|z-y|}dz\widetilde{u}_{1}(x)
\widetilde{u}_{1}(y) dxdy \\
>\frac{1}{\| u_{1}\|^{2}}\int_{\Omega}\int_{\Omega}\int_{\Omega}
\frac{1}{2\pi}\ln\frac{1}{|x-z|}\frac{1}{2\pi}\ln\frac{1}{|z-y|}dz u_{1}(x)u_{1}(y)dxdy=\frac{1}{\mu_{1}^{2}},
\end{multline}
where
$\mu_{1}^{2}$ is the smallest characteristic number of $\mathcal{L}^{2}_\Omega$ and $u_{1}$
is the eigenfunction corresponding to $\mu_{1}^{2}$, i.e.
$$u_{1}=\mu_{1}^{2}\mathcal{L}^{2}_\Omega u_{1}.$$
Therefore, by the variational principle we also have
\begin{equation}\label{n3}
\frac{1}{\mu_{1}^{2}}=\sup_{f\in L^{2}(\Omega), f\not=0}\frac{(\mathcal{L}^{2}_\Omega f,f)}{\| f\|^{2}}.
\end{equation}
This means that the strong inequality \eqref{n2} contradicts the variational
principle \eqref{n3} because
$\|\widetilde{u}_{1}\|_{L^2}=\|{u}_{1}\|_{L^2}<\infty$.

Since $u_{1}$ is nonnegative it follows that
$\mu_{1}$ is simple. Indeed, if there were an eigenfunction
$v_{1}$ linearly independent of $u_{1}$ and corresponding
to $\mu_{1}$, then for all real $c$ the linear combination
$u_{1}+c v_{1}$ also would be an eigenfunction corresponding
to $\mu_{1}$ and therefore, by what has been proved, it could not become negative
in $\Omega$. As $c$ is arbitrary, this is impossible.
\end{proof}

\begin{proof}[Proof of Theorem \ref{THM:1}]
Let $\Omega$ be a bounded open set in $\mathbb R^{2}.$
Its symmetric rearrangement $\Omega^{\ast}\equiv D$ is an open disc centred at $0$
with the measure equal to the measure of $\Omega,$ i.e. $|D|=|\Omega|$.
Let $u$ be a nonnegative measurable function in $\Omega$,
such that all its positive level sets have finite measure.
In the definition of the symmetric-decreasing rearrangement of $u$
one can use the so-called layer-cake decomposition (see \cite{LL}),
which expresses a nonnegative function $u$ in terms of its level sets as
\begin{equation}
u(x)=\int_0^{\infty}\chi_{\left\{ u(x)>t\right\}}dt,
\end{equation}
where $\chi$ is the characteristic function of the corresponding domain.

\begin{defn} Let $u$ be a nonnegative measurable function in $\Omega$. The function
\begin{equation}
u^{\ast}(x):=\int_0^{\infty}\chi_{\left\{u(x)>t\right\}^{\ast}\label{2}
}dt
\end{equation}
is called the (radially) symmetric-decreasing rearrangement of a nonnegative measurable function $u$.
\end{defn}

 As in the proof of Lemma \ref{lem:1}
$\mu_{1}^{2}(\Omega)$ is the smallest characteristic number of $\mathcal{L}^{2}_\Omega$ and $u_{1}$
is the eigenfunction corresponding to $\mu_{1}^{2}$, i.e.
$$u_{1}=\mu_{1}^{2}(\Omega)\mathcal{L}^{2}_\Omega u_{1}.$$
By Lemma \ref{lem:1} the first characteristic number $\mu_{1}$ of the operator
 $\mathcal{L}_\Omega$ is simple; the corresponding
eigenfunction $u_{1}$ can be chosen positive in $\Omega$, and in view of Lemma
\ref{lem:1} we can apply the above construction to the first eigenfunction $u_{1}$.
Recall\footnote{For the proof of the rearrangement inequality \eqref{n4}
for the logarithmic kernel see Lemma \ref{lem:Riesz}. The proof is the same with
the difference that in this case the symmetric decreasing rearrangement is used instead of the Steiner symmetrization.}
 the rearrangement inequality for the logarithmic kernel (cf. Lemma 2 in \cite{CL}):

\begin{multline}
\int_{\Omega}\int_{\Omega}\int_{\Omega}
u_{1}(y)\frac{1}{2\pi}\ln\frac{1}{|y-z|}
\frac{1}{2\pi}\ln\frac{1}{|z-x|}u_{1}(x)dzdydx\leq\\
\int_{D}
\int_{D}\int_{D}
u_{1}^{\ast}(y)\frac{1}{2\pi}\ln\frac{1}{|y-z|}
\frac{1}{2\pi}\ln\frac{1}{|z-x|}u_{1}^{\ast}(x)dzdydx.\label{n4}
\end{multline}
In addition, for each nonnegative function $u\in L^{2}(\Omega)$ we have
\begin{equation}
\| u\|_{L^{2}(\Omega)}=\| u^{\ast}\|_{L^{2}(D)}.\label{n5}
\end{equation}

Therefore, from \eqref{n4}, \eqref{n5} and the variational principle for the positive operator $\mathcal{L}^{2}_D$, we get
$$
\mu_{1}^{2}(\Omega) =\frac{\int_{\Omega}|u_{1}(x)|^{2}dx}{\int_{\Omega}\int_{\Omega}\int_{\Omega}
u_{1}(y)\frac{1}{2\pi}\ln\frac{1}{|y-z|}
\frac{1}{2\pi}\ln\frac{1}{|z-x|}u_{1}(x)dzdydx}\geq$$
$$\frac{\int_{D}|u^{\ast}_{1}(x)|^{2}dx}
{\int_{D}\int_{D}\int_{D}
u^{\ast}_{1}(y)\frac{1}{2\pi}\ln\frac{1}{|y-z|}
\frac{1}{2\pi}\ln\frac{1}{|z-x|}u^{\ast}_{1}(x)dzdydx}\geq
$$
$$
\inf_{v\in
L^{2}(D), v\not=0}\frac{\int_{D}|v(x)|^{2}dx}{\int_{D}\int_{D}\int_{D}
v(y)\frac{1}{2\pi}\ln\frac{1}{|y-z|}
\frac{1}{2\pi}\ln\frac{1}{|z-x|}v(x)dzdydx}=\mu_{1}^{2}(D).
$$
Finally, note that $0$ is not a characteristic number of $\mathcal{L}_D$ (cf. Corollary 1 in \cite{Tr67}). Therefore,
$$0<|\mu_{1}(D)|.$$
This completes the proof.
\end{proof}

\section{Proofs of Theorem \ref{THM:main} and Theorem \ref{THM:main2}}
\label{SEC:proof}
First we prove the Brascamp-Lieb-Luttinger type rearrangement inequality for the logarithmic kernel (cf. \cite{BLL}).

\begin{lem}\label{lem:n1} Let $D$ be a disc centred at the origin. Then
\begin{multline}
\int_{\Omega}...
\int_{\Omega}\frac{1}{2\pi}\ln\frac{1}{|y_{1}-y_{2}|}...
\frac{1}{2\pi}\ln\frac{1}{|y_{p}-y_{1}|}dy_{1}...dy_{p}\leq
\\
\int_{D}...
\int_{D}\frac{1}{2\pi}\ln\frac{1}{|y_{1}-y_{2}|}...
\frac{1}{2\pi}\ln\frac{1}{|y_{p}-y_{1}|}dy_{1}...dy_{p},\label{eqn16}
\end{multline}
for any $p=2,3,\ldots,$ and for any bounded open set $\Omega$ with $|\Omega|=|D|.$
\end{lem}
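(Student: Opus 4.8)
The plan is to prove \eqref{eqn16} as a Brascamp--Lieb--Luttinger type rearrangement inequality \cite{BLL} for the \emph{fixed} radial kernel $W(z)=\frac{1}{2\pi}\ln\frac{1}{|z|}$, moving $\Omega$ to its symmetric rearrangement $\Omega^{*}=D$ while keeping the cyclic product of kernels in place. Note first that the left-hand side of \eqref{eqn16} is exactly the cyclic integral $\int_{\Omega}\!\cdots\!\int_{\Omega}\prod_{i=1}^{p}W(y_{i}-y_{i+1})\,dy$ (with the convention $y_{p+1}:=y_{1}$), that is, $\mathrm{Tr}\,\mathcal{L}_{\Omega}^{p}$; this is the quantity that will later yield Theorem~\ref{THM:main} through $\|\mathcal{L}_{\Omega}\|_{p}^{p}=\mathrm{Tr}\,\mathcal{L}_{\Omega}^{p}$ for even $p$. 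The one structural feature available is that $W$ is strictly \emph{decreasing} in $|z|$, so that morally \eqref{eqn16} should follow from a rearrangement inequality for symmetric-decreasing kernels.

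The concrete route I would take is truncation followed by \cite{BLL}. Since $\Omega$, $D$, and every intermediate Steiner symmetral stay inside a fixed ball (Steiner symmetrization does not increase the diameter), all the arguments $y_{i}-y_{i+1}$ range over a bounded set on which $W$ is bounded below. I would replace $W$ by a nonnegative symmetric-decreasing function $\widetilde{W}$ that agrees with $W$ on this set -- for instance $\widetilde{W}=\max(W,0)$ when the pairwise distances do not exceed $1$, where $W\ge0$ already. The truncation $\widetilde{W}$ is genuinely nonnegative and radially nonincreasing, hence equal to its own symmetric-decreasing rearrangement, so the Brascamp--Lieb--Luttinger inequality \cite{BLL} applies to the product $\prod_{i=1}^{p}\widetilde{W}(y_{i}-y_{i+1})$ over the $p$ cyclic edges (equivalently, to the $2p$ factors $\chi_{\Omega}(y_{i})$ and $\widetilde{W}(y_{i}-y_{i+1})$) and gives
\begin{equation*}
\int_{\Omega}\!\cdots\!\int_{\Omega}\prod_{i=1}^{p}\widetilde{W}(y_{i}-y_{i+1})\,dy
\le
\int_{D}\!\cdots\!\int_{D}\prod_{i=1}^{p}\widetilde{W}(y_{i}-y_{i+1})\,dy .
\end{equation*}
Because $\widetilde{W}$ coincides with $W$ on all configurations drawn from $\Omega$ and from $D$, the two integrals equal the corresponding integrals of $\prod_{i}W(y_{i}-y_{i+1})$, and \eqref{eqn16} follows. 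Equivalently -- and this is the variant indicated in the footnote to \eqref{n4} and in \cite{CL} -- one may run the inequality through iterated Steiner symmetrizations: reduce to monotonicity under a single symmetrization, slice in the symmetrization direction, and reduce to the one-dimensional cyclic inequality for the sliced kernels $k_{a}(\tau)=W(\sqrt{\tau^{2}+a^{2}})=-\frac{1}{4\pi}\ln(\tau^{2}+a^{2})$, each of which is even and nonincreasing in $|\tau|$.

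The main obstacle is the sign-indefiniteness of $W$: it is positive for $|z|<1$ but negative for $|z|>1$, while the classical layer-cake proof of \cite{BLL} requires nonnegative factors. When all relevant distances are $\le1$, $W$ is already nonnegative there, $W=\widetilde{W}$, and \cite{BLL} applies directly to give \eqref{eqn16} for every $p\ge2$. In general one must write $W=\widetilde{W}+c$ on the ball with a nonzero additive constant $c$; expanding $\prod_{i}(\widetilde{W}+c)$ then produces sub-products over proper subsets of the cyclic edges whose coefficients are powers of $c$ and hence sign-changing, and the individual \cite{BLL} estimates for these sub-products do not combine term by term. To make this rigorous I would approximate $W$ from the positive side by Riesz kernels, using $\ln\frac{1}{|z|}=\lim_{\lambda\to0^{+}}\frac{|z|^{-\lambda}-1}{\lambda}$, apply \cite{BLL} to each strictly symmetric-decreasing kernel $|z|^{-\lambda}$, and pass to the limit $\lambda\to0^{+}$, organising the expansion so that the surviving top-order term reproduces $\int_{\Omega^{p}}\prod_{i}\ln\frac{1}{|y_{i}-y_{i+1}|}\,dy$. (The parity of $p$ does not enter the lemma itself; it reappears only when \eqref{eqn16} is converted into the Schatten-norm statement of Theorem~\ref{THM:main}, which for self-adjoint $\mathcal{L}_{\Omega}$ needs $p$ even, or positivity for odd $p$ as in Theorem~\ref{THM:main2}.)
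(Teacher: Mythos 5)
Your setup is right---\eqref{eqn16} is indeed a Brascamp--Lieb--Luttinger statement for the cyclic product of the fixed radial kernel $W(z)=\frac{1}{2\pi}\ln\frac{1}{|z|}$ against the $p$ factors $\chi_{\Omega}(y_{i})$, and the sole obstacle is the sign-indefiniteness of $W$---but neither of your two routes actually closes the argument. The truncation $\widetilde{W}=\max(W,0)$ coincides with $W$ on all configurations drawn from $\Omega$ and $D$ only if every pairwise distance stays below $1$; a bounded open set of prescribed measure has no a priori diameter bound, so on the relevant range $W$ is genuinely negative and no \emph{nonnegative} symmetric-decreasing function can agree with it there. You concede this and pass to $W=\widetilde{W}+c$, correctly observe that expanding $\prod_{i}(\widetilde{W}+c)$ yields $2^{p}$ sub-products with alternating-sign coefficients whose individual BLL estimates cannot be summed, and then propose the Riesz-kernel limit $\ln\frac{1}{|z|}=\lim_{\lambda\to0^{+}}\frac{|z|^{-\lambda}-1}{\lambda}$. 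But that limit reproduces the identical obstruction:
\begin{equation*}
\prod_{i=1}^{p}\frac{|y_{i}-y_{i+1}|^{-\lambda}-1}{\lambda}
=\lambda^{-p}\sum_{S\subseteq\{1,\dots,p\}}(-1)^{p-|S|}\prod_{i\in S}|y_{i}-y_{i+1}|^{-\lambda},
\end{equation*}
where each summand separately satisfies the BLL inequality but with signs $(-1)^{p-|S|}$, so the inequality does not pass to the limit term by term. ``Organising the expansion so that the surviving top-order term reproduces'' the logarithmic integral is exactly the missing step, not a detail to be filled in; as written, your argument is complete only in the degenerate case where all distances in $\Omega$ and $D$ are at most $1$.

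The paper takes a different, and more concrete, route through the negative part. Following Lemma 2 of \cite{CL}, it fixes $r_{0}>0$ and splits $\frac{1}{2\pi}\ln\frac{1}{r}=h_{1}(r)+h_{2}(r)$, where $h_{1}=f-f_{\infty}$ is built from an explicit auxiliary function $f$ (equal to $\frac{1}{2\pi}\ln\frac1r$ for $r\le r_{0}$ and flattened beyond $r_{0}$ so as to remain strictly decreasing with a finite limit $f_{\infty}$); thus $h_{1}$ is positive and strictly decreasing, hence directly admissible in BLL, while $h_{2}$ is merely decreasing, constant near $r=0$, and carries the negative tail. The cyclic product is then expanded in $h_{1},h_{2}$, and each term containing $h_{2}$ is handled by writing $h_{2}=q_{R}+h_{2}(R)$ on distances $\le R$, where $q_{R}(r)=h_{2}(r)-h_{2}(R)$ is nonnegative and nonincreasing (so BLL applies to it), and letting $R\to\infty$; the leftover pieces involve only the constant $h_{2}(R)$ times lower-order integrals of $h_{1}$ and of $1$. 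If you want to repair your proposal you should either adopt this two-term decomposition with the $q_{R}$ truncation, or supply a genuine mechanism by which the alternating-sign Riesz expansion passes monotonically to the limit; at present the key inequality is asserted rather than proved.
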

\begin{proof} [Proof of Lemma \ref{lem:n1}]
Here we prove it for $p=2$ and the proof is based on the proof of Lemma 2 in \cite{CL}.
The proof for arbitrary $p$ is essentially the same as the case $p=2$.
Let us fix $r_{0}>0$ and consider the function
\begin{equation}
f(r):=\left\{
\begin{array}{ll}
    \frac{1}{2\pi}\ln\,\frac{1}{r},\,\,r\leq r_{0},\\
    \frac{1}{2\pi}\ln\,\frac{1}{r_0}-\frac{1}{2\pi}\int_{r_{0}}^{r}s^{-1}\frac{1+r_{0}^{2}}{1+s^{2}}ds, \,\,r>r_{0}. \\
\end{array}
\right.
\label{EQ:fs1}
\end{equation}
Let us show that the function $f(r)$ is strictly decreasing and has a limit as $r\rightarrow \infty$.
If $r\leq r_{0}$ then
$$
f(r_{1})=\frac{1}{2\pi}\ln\,\frac{1}{r_{1}}> \frac{1}{2\pi}\ln\,\frac{1}{r_{2}}=f(r_{2})
$$
for $r_{1}< r_{2}$.
If $r> r_{0}$ then
\begin{multline}\label{002}
f(r)=\frac{1}{2\pi}\ln\,\frac{1}{r_0}-\frac{1}{2\pi}\int_{r_{0}}^{r}s^{-1}
\frac{1+r_{0}^{2}}{1+s^{2}}ds=
\\
\frac{1}{2\pi}\ln\,\frac{1}{r_0}-
\frac{1}{2\pi}(1+r_{0}^{2})[\ln r-\frac{1}{2}\ln(1+r^{2})-\ln r_{0}+\frac{1}{2}\ln(1+r_{0}^{2})].
\end{multline}
Thus
$f(r_{1})>f(r_{2})$
for $r_{1}< r_{2}$, that is,
$f(r)$ is  strictly decreasing.
From \eqref{002} it is easy to see
that
\begin{equation}
\lim_{r\rightarrow\infty} f(r)=\frac{1}{2\pi}\ln\,\frac{1}{r_0}-
\frac{1}{2\pi}(1+r_{0}^{2})[-\ln r_{0}+\frac{1}{2}\ln(1+r_{0}^{2})].
\end{equation}
We use the notation
$$
f_{\infty}:=\frac{1}{2\pi}\ln\,\frac{1}{r_0}-
\frac{1}{2\pi}(1+r_{0}^{2})[-\ln r_{0}+\frac{1}{2}\ln(1+r_{0}^{2})].
$$
By construction $\frac{1}{2\pi}\ln\,\frac{1}{r}-f(r)$ is decreasing.
Thus if we define
$$h_{1}(r)=f(r)-f_{\infty}$$
we have the decomposition
$$\frac{1}{2\pi}\ln\,\frac{1}{r}=h_{1}(r)+h_{2}(r)$$
where $h_{1}$ is positive strictly decreasing function and $h_{2}$
is decreasing. Hence by the Brascamp-Lieb-Luttinger rearrangement inequality we have
\begin{multline}
\int_{\Omega}\int_{\Omega}
h_{1}(|y_{1}-y_{2}|)h_{1}(|y_{2}-y_{1}|)dy_{1}dy_{2}\leq\\
\int_{D}
\int_{D}h_{1}(|y_{1}-y_{2}|)h_{1}(|y_{2}-y_{1}|)dy_{1}dy_{2}\label{01}
\end{multline}
and
\begin{multline}
\int_{\Omega}\int_{\Omega}
h_{2}(|y_{1}-y_{2}|)h_{2}(|y_{2}-y_{1}|)dy_{1}dy_{2}\leq \\ \int_{D}
\int_{D}h_{2}(|y_{1}-y_{2}|)h_{2}(|y_{2}-y_{1}|)dy_{1}dy_{2}.\label{01}
\end{multline}
Thus it remains to show
\begin{multline}
\int_{\Omega}\int_{\Omega}
h_{1}(|y_{1}-y_{2}|)h_{2}(|y_{2}-y_{1}|)dy_{1}dy_{2}\leq \\\int_{D}
\int_{D}h_{1}(|y_{1}-y_{2}|)h_{2}(|y_{2}-y_{1}|)dy_{1}dy_{2}\label{01}
\end{multline}
which does not follow directly from the Brascamp-Lieb-Luttinger rearrangement inequality since $h_{2}$ is not positive.
Define for $R>0$
\begin{equation}
q_{R}(r):=\left\{
\begin{array}{ll}
    h_{2}(r)-h_{2}(R),\,\,r\leq R,\\
    0, \,\,r>R, \\
\end{array}
\right.
\label{EQ:fs2}
\end{equation}
and note that by monotone convergence
\begin{equation}
I_{\Omega}(h_{1},h_{2})=\lim_{R\rightarrow\infty}[I_{\Omega}(h_{1},q_{R})+h_{2}(R)\int_{\Omega}\int_{\Omega}h_{1}(|y_{1}-y_{2}|)dy_{1}dy_{2}]
\end{equation}
with the notation
\begin{equation}
I_{\Omega}(f,g)=\int_{\Omega}\int_{\Omega}
f(|y_{1}-y_{2}|)g(|y_{2}-y_{1}|)dy_{1}dy_{2}.
\end{equation}
Since $h_{1}$ and $q_{R}$ are positive and nonincreasing
$$I_{\Omega}(h_{1}, q_{R})\leq I_{D}(h_{1},q_{R})$$
by the Brascamp-Lieb-Luttinger rearrangement inequality.
Noting that
$$\int_{\Omega}\int_{\Omega}h_{1}(|y_{1}-y_{2}|)dy_{1}dy_{2}\leq\int_{D}\int_{D}h_{1}(|y_{1}-y_{2}|)dy_{1}dy_{2}$$
we obtain
\begin{multline}
I_{\Omega}(h_{1},h_{2})=
\lim_{R\rightarrow\infty}[I_{\Omega}(h_{1},q_{R})+h_{2}(R)\int_{\Omega}\int_{\Omega}h_{1}(|y_{1}-y_{2}|)dy_{1}dy_{2}]\leq\\
\lim_{R\rightarrow\infty}[I_{D}(h_{1},q_{R})+h_{2}(R)\int_{D}\int_{D}h_{1}(|y_{1}-y_{2}|)dy_{1}dy_{2}]
=I_{D}(h_{1},h_{2}),
\end{multline}
completing the proof.
\end{proof}

\begin{proof} [Proof of Theorem \ref{THM:main}]
Since the logarithmic potential operator is a Hilbert-Schmidt operator, by using bilinear expansion of its iterated kernels (see, for example, \cite{Vla}) we obtain for $p\geq 2$, $p\in\mathbb  N$,
\begin{equation}
\sum_{j=1}^{\infty}\frac{1}{\mu_{j}^{p}(\Omega)}=\int_{\Omega}...
\int_{\Omega}\frac{1}{2\pi}\ln\frac{1}{|y_{1}-y_{2}|}...\frac{1}{2\pi}\ln\frac{1}{|y_{p}-y_{1}|}dy_{1}...dy_{p}.\label{eq15}
\end{equation}
Recalling the inequality \eqref{eqn16} stating that
$$
\int_{\Omega}...
\int_{\Omega}\frac{1}{2\pi}\ln\frac{1}{|y_{1}-y_{2}|}...\frac{1}{2\pi}\ln\frac{1}{|y_{p}-y_{1}|}dy_{1}...dy_{p}\leq$$
\begin{equation}\int_{D}...
\int_{D}\frac{1}{2\pi}\ln\frac{1}{|y_{1}-y_{2}|}...\frac{1}{2\pi}\ln\frac{1}{|y_{p}-y_{1}|}dy_{1}...dy_{p},\label{eq16}
\end{equation}
we obtain
\begin{equation}
\sum_{j=1}^{\infty}\frac{1}{\mu_{j}^{p}(\Omega)}\leq
\sum_{j=1}^{\infty}\frac{1}{\mu_{j}^{p}(D)},\quad p\geq 2,\,\,\, p\in \mathbb N,\label{eq17}
\end{equation}
for any bounded open domain $\Omega\subset \mathbb R^{2}$ with $|\Omega|=|D|$.
Taking even $p$ in \eqref{eq17} we complete the proof of Theorem \ref{THM:main}.
\end{proof}

\begin{proof} [Proof of Theorem \ref{THM:main2}]
The inequality \eqref{eq17} also proves Theorem \ref{THM:main2}
when the logarithmic potential operator is positive (see also Remark \ref{REM:positivity}).
\end{proof}

\begin{rem}
It follows from the properties of the kernel that the Schatten $p$-norm of the operator $\mathcal{L}_\Omega$ is finite when $p>1$ see e.g. the criteria for Schatten classes in terms of the regularity of the kernel in \cite{DR}.
The above techniques do not allow us to prove Theorem \ref{THM:main} for all $p>1$. In view of the Dirichlet Laplacian case, it seems reasonable to conjecture that the Schatten $p$-norm is still maximised on the disc also for all $p>1$.
\end{rem}

\section{On the case of polygons}

We can ask the same question of maximizing the Schatten $p$-norms in the class of polygons with a
given number $n$ of sides. We denote by $\mathcal{P}_{n}$ the class of plane polygons with $n$ edges.
We would like to identify the maximiser for Schatten $p$-norms of the logarithmic potential $\mathcal{L}_\Omega$ in $\mathcal{P}_{n}$.
According to Section \ref{SEC:result},
it is natural to conjecture that it is the $n$-regular polygon. Currently, we can prove this only for $n=3$:

\begin{thm}\label{THM:tri}
The equilateral triangle centred at the origin has the largest Schatten $p$-norm of the operator
$\mathcal{L}_\Omega$ for any even integer $2\leq p< \infty$ among all triangles of a given area.
More precisely, if $\Delta$ is the equilateral triangle centred at the origin, we have
\begin{equation}
\|\mathcal{L}_{\Omega}\|_{p}\leq  \|\mathcal{L}_{\Delta}\|_{p}
\label{3-1}
\end{equation}
for any even integer $2\leq p\leq \infty$ and any bounded open triangle $\Omega$ with $|\Omega|=|\Delta|.$
\end{thm}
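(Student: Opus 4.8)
The plan is to reduce the claim to a rearrangement inequality for a multiple integral and then to run a Steiner symmetrization argument inside the class of triangles. For an even integer $p$ the singular values $s_{j}=1/|\mu_{j}(\Omega)|$ of $\mathcal{L}_{\Omega}$ satisfy $s_{j}^{p}=1/\mu_{j}^{p}(\Omega)$, so, exactly as in \eqref{eq15},
\begin{equation*}
\|\mathcal{L}_{\Omega}\|_{p}^{p}=\sum_{j=1}^{\infty}\frac{1}{\mu_{j}^{p}(\Omega)}
=\int_{\Omega}\cdots\int_{\Omega}\frac{1}{2\pi}\ln\frac{1}{|y_{1}-y_{2}|}\cdots\frac{1}{2\pi}\ln\frac{1}{|y_{p}-y_{1}|}\,dy_{1}\cdots dy_{p}.
\end{equation*}
Since this $p$-fold cyclic integral depends only on mutual distances and on $|\Omega|$, it is invariant under rigid motions, and it suffices to prove that, among all triangles of a fixed area, it is maximised by the equilateral triangle $\Delta$.

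\textbf{Steiner symmetrization inside triangles.} The first ingredient is the geometric observation that Steiner symmetrization in a direction parallel to one of the sides of a triangle produces again a triangle. Indeed, taking that side as base, the cross-sections parallel to it are intervals whose length is an affine function of the height (the two base vertices lie at the same height, so there is no interior breakpoint); centring these intervals therefore yields an isosceles triangle of the same area with that side as base. Consequently the class of triangles is preserved under such symmetrizations, and by the classical argument of P\'olya and Szeg\H{o} a suitable sequence of them carries any triangle $\Omega$ into the equilateral triangle $\Delta$ of the same area in the limit (up to a rigid motion, which by the previous paragraph is immaterial).

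\textbf{Monotonicity of the integral.} The second ingredient is that the above $p$-fold integral does not decrease under a single Steiner symmetrization; this is the Steiner analogue of Lemma \ref{lem:n1}, and I would prove it by repeating that argument with the symmetric-decreasing rearrangement replaced by Steiner symmetrization in a fixed direction. Writing $\frac{1}{2\pi}\ln\frac{1}{r}=h_{1}(r)+h_{2}(r)$ with $h_{1}$ positive and strictly decreasing and $h_{2}$ decreasing, one expands the cyclic product of the $p$ logarithmic factors into $2^{p}$ products of $h_{1}$'s and $h_{2}$'s. To every term one applies the Brascamp--Lieb--Luttinger inequality in its Steiner symmetrization form on the factors built from the positive nonincreasing function $h_{1}$, while the remaining non-positive factors $h_{2}$ are reduced to this situation by the truncation $q_{R}$ of \eqref{EQ:fs2} and monotone convergence, precisely as in the proof of Lemma \ref{lem:n1}. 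This gives
\begin{equation*}
\int_{\Omega}\cdots\int_{\Omega}\frac{1}{2\pi}\ln\frac{1}{|y_{1}-y_{2}|}\cdots\frac{1}{2\pi}\ln\frac{1}{|y_{p}-y_{1}|}\,dy
\leq
\int_{\Omega^{\ast}}\cdots\int_{\Omega^{\ast}}\frac{1}{2\pi}\ln\frac{1}{|y_{1}-y_{2}|}\cdots\frac{1}{2\pi}\ln\frac{1}{|y_{p}-y_{1}|}\,dy,
\end{equation*}
for every Steiner symmetrization $\Omega^{\ast}$ of $\Omega$.

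\textbf{Conclusion and the main obstacle.} Applying the monotonicity step along a symmetrizing sequence $\Omega=\Omega_{0},\Omega_{1},\Omega_{2},\dots\to\Delta$ produces a nondecreasing sequence of values, and a continuity argument for the integral along this convergent sequence of equi-area triangles (the integrand is integrable over a fixed bounded region containing all $\Omega_{n}$ after suitable congruences, and $\chi_{\Omega_{n}}\to\chi_{\Delta}$ in measure, so dominated convergence applies) identifies the limit with the value on $\Delta$. Taking $p$-th roots yields \eqref{3-1}; the operator-norm case $p=\infty$ follows analogously, along the lines of the proof of Theorem \ref{THM:1} with Steiner symmetrization in place of symmetric-decreasing rearrangement. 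I expect the main obstacle to be the geometric step rather than the analytic one: one must keep the whole symmetrization sequence within the class of triangles, verify its convergence to the equilateral triangle, and justify the passage to the limit in the integral. By contrast, the non-positivity of the logarithmic kernel poses no new difficulty here, being dealt with by exactly the same decomposition and truncation as in Lemma \ref{lem:n1}.
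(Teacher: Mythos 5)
Your proposal is correct and follows essentially the same route as the paper: the trace identity \eqref{eq15}, a Steiner-symmetrization version of Lemma \ref{lem:n1} obtained via the same $h_{1}+h_{2}$ decomposition and $q_{R}$ truncation, and the classical fact that iterated Steiner symmetrizations with respect to the mediators of the sides keep a triangle a triangle and converge to the equilateral one. You in fact supply more detail than the paper's own (very brief) proof on the two points it glosses over, namely the preservation of the triangle class under these symmetrizations and the passage to the limit along the symmetrizing sequence.
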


Similarly, we have the following analogy of Theorem \ref{THM:main2}:

\begin{thm}\label{THM:tri2}
Let $\Delta$ be an equilateral triangle centred at the origin and let $\Omega$
be a bounded open triangle with $|\Omega|=|\Delta|.$
Assume that the logarithmic potential operator is positive for
$\Omega$ and $\Delta$.
Then
\begin{equation}
\|\mathcal{L}_{\Omega}\|_{p}\leq  \|\mathcal{L}_{\Delta}\|_{p}
\label{3-1}
\end{equation}
for any integer $2\leq p<\infty$.
\end{thm}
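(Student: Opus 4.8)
The plan is to reduce Theorem \ref{THM:tri2} to the integral inequality underlying Theorem \ref{THM:tri}, exactly as Theorem \ref{THM:main2} was reduced to \eqref{eq17}. Write
\[
T_{p}(\Omega):=\int_{\Omega}\cdots\int_{\Omega}\frac{1}{2\pi}\ln\frac{1}{|y_{1}-y_{2}|}\cdots\frac{1}{2\pi}\ln\frac{1}{|y_{p}-y_{1}|}\,dy_{1}\cdots dy_{p}
\]
for the $p$-fold cyclic integral of the logarithmic kernel. First I would recall the trace identity \eqref{eq15}, which holds for any bounded open domain and in particular for any triangle, namely $\sum_{j=1}^{\infty}\mu_{j}^{-p}(\Omega)=T_{p}(\Omega)$ for every integer $p\geq 2$.

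Next I would observe that the rearrangement argument establishing Theorem \ref{THM:tri} in fact yields the integral inequality $T_{p}(\Omega)\leq T_{p}(\Delta)$ for \emph{every} integer $p\geq 2$, not merely for even $p$. Indeed, the restriction to even $p$ in Theorem \ref{THM:tri} is forced only at the last step, where $T_{p}(\Omega)$ is identified with $\|\mathcal{L}_{\Omega}\|_{p}^{p}$; the inequality $T_{p}(\Omega)\leq T_{p}(\Delta)$ itself comes from a Steiner-symmetrization analogue of Lemma \ref{lem:n1}, obtained by splitting the logarithmic kernel into a positive strictly decreasing part and a decreasing part and applying the Steiner-symmetrization form of the Brascamp-Lieb-Luttinger inequality to the cyclic product. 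This step does not see the parity of $p$. Combined with the classical fact that a finite sequence of area-preserving Steiner symmetrizations carries any triangle to the equilateral triangle $\Delta$, this delivers $T_{p}(\Omega)\leq T_{p}(\Delta)$ for all integer $p\geq 2$.

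Combining the trace identity with this inequality produces
\[
\sum_{j=1}^{\infty}\frac{1}{\mu_{j}^{p}(\Omega)}\leq\sum_{j=1}^{\infty}\frac{1}{\mu_{j}^{p}(\Delta)},\qquad p\geq 2,\ p\in\mathbb{N}.
\]
Finally I would invoke the positivity hypothesis: when $\mathcal{L}_{\Omega}$ and $\mathcal{L}_{\Delta}$ are positive, every characteristic number is positive and equals the inverse of the corresponding singular value, so $\mu_{j}^{-p}=s_{j}^{p}$ and the two sides above are precisely $\|\mathcal{L}_{\Omega}\|_{p}^{p}$ and $\|\mathcal{L}_{\Delta}\|_{p}^{p}$. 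Taking $p$-th roots gives \eqref{3-1} for every integer $2\leq p<\infty$, as required.

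The only ingredient beyond the even case is the remark that $T_{p}(\Omega)\leq T_{p}(\Delta)$ holds for all integer $p$; the positivity assumption then replaces the parity argument that was available for even $p$. Accordingly, the main obstacle is not in Theorem \ref{THM:tri2} itself but is inherited from Theorem \ref{THM:tri}: one must justify the Steiner-symmetrization rearrangement inequality for the sign-indefinite logarithmic kernel and confirm that iterated Steiner symmetrizations of a triangle converge to the equilateral triangle of equal area. Once these are secured for the even case, Theorem \ref{THM:tri2} follows with only the positivity bookkeeping added.
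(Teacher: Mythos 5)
Your proposal follows essentially the same route as the paper: the trace identity \eqref{eq15} for the $p$-fold iterated kernel, the Steiner-symmetrization analogue of the Brascamp--Lieb--Luttinger inequality (via the decomposition of the logarithmic kernel into a positive strictly decreasing part and a decreasing part), the convergence of iterated Steiner symmetrizations of a triangle to the equilateral one, and finally the positivity hypothesis to identify $\mu_j^{-p}$ with $s_j^p$ for odd $p$. This is exactly how the paper reduces Theorem \ref{THM:tri2} to the argument for Theorems \ref{THM:main}, \ref{THM:main2} and \ref{THM:tri}, so the proposal is correct and matches the paper's proof.
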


Let $u$ be a nonnegative, measurable function on $\mathbb{R}^{2}$, and let $x^{2}$ be a line through the origin of $\mathbb{R}^{2}$. Choose an orthogonal coordinate system in $\mathbb{R}^{2}$ such that the $x^{1}$-axis is perpendicular to $x^{2}$.
\begin{defn}[\cite{BLL}]
A nonnegative, measurable function $u^{\star}(x|x^{2})$ on $\mathbb{R}^{2}$ is called a
{\em Steiner symmetrization} with respect to $x^{2}$ of the function $u(x)$, if $u^{\star}(x^{1},x^{2})$ is a symmetric decreasing rearrangement with respect to $x^{1}$ of $u(x^{1},x^{2})$ for each fixed $x^{2}$.
\end{defn}

The Steiner symmetrization (with respect to the $x^{1}$-axis) $\Omega^{\star}$ of a measurable set $\Omega$ is defined in the following way:
if we write $(x^{1},z)$ with $z\in \mathbb{R}$, and let $\Omega_{z}=\{x^{1}:\; (x^{1},z)\in\Omega\}$, then
$$
\Omega^{\star}=\{(x^{1},z)\in \mathbb{R}\times\mathbb{R}:\; x^{1}\in \Omega^{*}_{z}\}
$$
where $\Omega^{*}$ is a symmetric rearrangement of $\Omega$ (see the proof of Theorem \eqref{THM:1}).
We obtain:

\begin{lem}\label{lem:Riesz}
For a positive function $u$ and a measurable $\Omega\subset \mathbb{R}^{2}$ we have
\begin{multline}
\int_{\Omega}\int_{\Omega}\int_{\Omega}
u(y)\frac{1}{2\pi}\ln\frac{1}{|y-z|}
\frac{1}{2\pi}\ln\frac{1}{|z-x|}u(x)dzdydx\leq
\\
\int_{\Omega^{\star}}
\int_{\Omega^{\star}}\int_{\Omega^{\star}}u^{\star}(y)
\frac{1}{2\pi}\ln\frac{1}{|y-z|}
\frac{1}{2\pi}\ln\frac{1}{|z-x|}u^{\star}(x)dzdydx,\label{nn1}
\end{multline}
where $\Omega^{\star}$ and $u^{\star}$ are Steiner symmetrizations of  $\Omega$ and $u$, respectively.
\end{lem}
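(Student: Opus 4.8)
The statement \eqref{nn1} is exactly the one-step, single-Steiner-symmetrization instance of a Brascamp-Lieb-Luttinger rearrangement inequality \cite{BLL} for a weighted three-fold integral, and the plan is to prove it by the decomposition device already used for Lemma \ref{lem:n1}, now carrying the two weights $u(x),u(y)$ through the argument. Writing $K(r)=\frac{1}{2\pi}\ln\frac1r$, the left-hand side of \eqref{nn1} is the multilinear integral in the three variables $x,y,z\in\mathbb R^{2}$ with factors $u(x)$, $u(y)$, $\chi_{\Omega}(z)$, $K(|y-z|)$ and $K(|z-x|)$ (the first two weights also confine $x$ and $y$ to $\Omega$ once $u$ is extended by zero). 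Exactly as in Lemma \ref{lem:n1}, the entire difficulty is that the rearrangement inequality for such integrals requires every factor to be nonnegative, whereas the logarithmic kernel is sign-indefinite.

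First I would record the elementary nonnegative building block on which \cite{BLL} rests: a single Steiner symmetrization with respect to the chosen line does not decrease a multilinear integral of the above type when every factor is a nonnegative function. This step sends $u\mapsto u^{\star}$ and $\chi_{\Omega}\mapsto\chi_{\Omega^{\star}}$, and it fixes every radial nonincreasing kernel, since such a kernel is already symmetric decreasing in the $x^{1}$-direction for each fixed $x^{2}$. From the same building block I would extract the auxiliary mass comparison
\[
\int_{\Omega}\int_{\Omega}h_{1}(|y-z|)\,dy\,dz\le\int_{\Omega^{\star}}\int_{\Omega^{\star}}h_{1}(|y-z|)\,dy\,dz,
\]
together with the analogous one-kernel inequality with the weights $u,u^{\star}$ inserted on one variable, both used below.

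Next I would split the kernel exactly as in Lemma \ref{lem:n1}: fixing $r_{0}>0$ and using \eqref{EQ:fs1}--\eqref{002}, write $K=h_{1}+h_{2}$ with $h_{1}$ positive and strictly decreasing and $h_{2}$ decreasing but sign-indefinite. Substituting this into both factors $K(|y-z|)$ and $K(|z-x|)$ and expanding, the left-hand side of \eqref{nn1} becomes a sum of four three-fold integrals with integrand $u(x)u(y)\,h_{i}(|y-z|)\,h_{j}(|z-x|)$ for $i,j\in\{1,2\}$. In the term $i=j=1$ all five factors are nonnegative, so the one-step Steiner inequality applies directly and gives the $\Omega$-to-$\Omega^{\star}$ comparison for that term.

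The main obstacle is the three terms containing at least one factor $h_{2}$. Here I would reproduce the truncation of Lemma \ref{lem:n1}: for $R>0$ let $q_{R}$ be as in \eqref{EQ:fs2}, a nonnegative nonincreasing function with $h_{2}=q_{R}+h_{2}(R)$ on the bounded domains once $R$ exceeds their diameters. Substituting $h_{2}=q_{R}+h_{2}(R)$ into each such term splits it into integrals whose kernels are the nonnegative $q_{R}$ (to which the one-step Steiner inequality applies) plus lower-order pieces governed by the auxiliary mass comparisons, each multiplied by the constant $h_{2}(R)$; letting $R\to\infty$ by monotone convergence, precisely as in the final display of the proof of Lemma \ref{lem:n1}, is meant to yield the comparison for the mixed and $h_{2}h_{2}$ terms. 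The delicate point, and the place where all the work lies, is that these constants $h_{2}(R)$ are negative and multiply mass integrals satisfying only one-sided inequalities, so the $q_{R}$-terms and the mass terms must be combined so that the correct direction survives in the limit; this bookkeeping, together with carrying $u$ and $u^{\star}$ unchanged through every step, is the only feature new relative to Lemma \ref{lem:n1}. Summing the four contributions gives \eqref{nn1}, and the identical argument with the symmetric-decreasing rearrangement in place of Steiner symmetrization establishes the inequality \eqref{n4} used in Section \ref{sec:2}.
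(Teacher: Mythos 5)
Your proposal follows the paper's own route step for step: the same splitting $\frac{1}{2\pi}\ln\frac1r=h_1(r)+h_2(r)$ built from the cutoff function $f$ of \eqref{EQ:fs1}, the same truncation $q_R(r)=h_2(r)-h_2(R)$ for $r\le R$, and the same passage to the limit $R\to\infty$ after applying the Brascamp--Lieb--Luttinger inequality to the nonnegative pieces. The one point you single out and then defer --- that $h_2(R)$ is negative and multiplies mass integrals satisfying only one-sided inequalities, so that ``the correct direction must survive in the limit'' --- is, however, not bookkeeping; it is the entire content of the lemma, and your proposal does not close it. Indeed $h_2(r)=f_\infty<0$ for $r\le r_0$ and $h_2$ is decreasing, so $h_2(R)<0$ for every $R$; the quantities it multiplies, namely $J_\Omega(u,q_R)$ and $\widetilde J_\Omega(u,h_1)$ in the paper's notation, only \emph{increase} under Steiner symmetrization, so for those terms the termwise comparison runs in the wrong direction. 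Worse, once $R$ exceeds the diameters of $\Omega$ and $\Omega^\star$ the bracketed expressions are constant in $R$ and equal exactly to $I_\Omega(u,h_2)$ and $I_{\Omega^\star}(u^\star,h_2)$, so the inequality ``between the limits'' that you invoke is literally equivalent to the inequality being proved: no combination of the one-sided estimates you list can yield it.

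The only term where the sign causes no harm is $h_2^2(R)\bigl(\int_\Omega u\,dx\bigr)^2$, because there the coefficient is nonnegative and the mass $\int_\Omega u=\int_{\Omega^\star}u^\star$ is \emph{exactly invariant} under rearrangement. That is the mechanism a complete proof must exploit throughout: the decomposition has to be arranged so that every negative constant multiplies a rearrangement-invariant quantity, which is precisely how Carlen and Loss handle the two-fold logarithmic integral (the subtracted constant there pairs with the product of the $L^1$-norms). For the three-fold integral with two logarithmic kernels the mixed and $h_2h_2$ terms do not automatically acquire this structure. In fairness, the paper's own write-up passes over exactly the same point without comment, so you have correctly located the weak spot of the argument; but your proposal leaves it open rather than supplying the missing idea.
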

\begin{proof} [Proof of Lemma \ref{lem:Riesz}] The proof is based on the proof of Lemma 2 in \cite{CL}.
Let us fix $r_{0}>0$ and consider the function
\begin{equation}
f(r):=\left\{
\begin{array}{ll}
    \frac{1}{2\pi}\ln\,\frac{1}{r},\,\,r\leq r_{0},\\
    \frac{1}{2\pi}\ln\,\frac{1}{r_0}-\frac{1}{2\pi}\int_{r_{0}}^{r}s^{-1}\frac{1+r_{0}^{2}}{1+s^{2}}ds, \,\,r>r_{0}. \\
\end{array}
\right.
\label{EQ:fs1}
\end{equation}
The function $f(r)$ is strictly decreasing and has a limit as $r\rightarrow \infty$ (see the proof of Lemma \ref{lem:n1})
$$
\lim_{r\rightarrow\infty} f(r)=f_{\infty}.
$$
Since $f(r)$ is strictly decreasing $\frac{1}{2\pi}\ln\,\frac{1}{r}-f(r)$ is decreasing.
Thus if we define
$$h_{1}(r)=f(r)-f_{\infty}$$
we have the decomposition
$$\frac{1}{2\pi}\ln\,\frac{1}{r}=h_{1}(r)+h_{2}(r)$$
where $h_{1}$ is positive strictly decreasing function and $h_{2}$
is decreasing. Hence by the Brascamp-Lieb-Luttinger rearrangement inequality
for the Steiner symmetrization (see Lemma 3.2 in \cite{BLL}) we have

\begin{multline}
\int_{\Omega}\int_{\Omega}\int_{\Omega}
u(y)h_{1}(|y-z|)h_{1}(|z-x|)u(x)dzdydx\leq
\\
\int_{\Omega^{\star}}\int_{\Omega^{\star}}
\int_{\Omega^{\star}}u^{\star}(y)h_{1}(|y-z|)h_{1}(|z-x|)
u^{\star}(x)dzdydx.
\end{multline}
Thus it remains to show that
\begin{multline}\label{nnn1}
\int_{\Omega}\int_{\Omega}\int_{\Omega}
u(y)h_{2}(|y-z|)h_{2}(|z-x|)u(x)dzdydx\leq
\\
\int_{\Omega^{\star}}
\int_{\Omega^{\star}}\int_{\Omega^{\star}}
u^{\star}(y)h_{2}(|y-z|)h_{2}(|z-x|)u^{\star}(x)dzdydx
\end{multline}
and
\begin{multline}\label{nnn2}
\int_{\Omega}\int_{\Omega}\int_{\Omega}
u(y)h_{1}(|y-z|)h_{2}(|z-x|)u(x)dzdydx\leq
\\
\int_{\Omega^{\star}}
\int_{\Omega^{\star}}\int_{\Omega^{\star}}
u^{\star}(y)h_{1}(|y-z|)h_{2}(|z-x|)u^{\star}(x)dzdydx,
\end{multline}
which does not follow directly from the Brascamp-Lieb-Luttinger rearrangement inequality
since $h_{2}$ is not positive.
Define for $R>0$
\begin{equation}
q_{R}(r):=\left\{
\begin{array}{ll}
    h_{2}(r)-h_{2}(R),\,\,r\leq R,\\
    0, \,\,r>R, \\
\end{array}
\right.
\label{EQ:fs2}
\end{equation}
and note that by monotone convergence
we have
\begin{equation}
I_{\Omega}(u,h_{2})=\lim_{R\rightarrow\infty}[I_{\Omega}(u,q_{R})
+2h_{2}(R)J_{\Omega}(u,q_{R})+h_{2}^{2}(R)(\int_{\Omega}u(x)dx)^{2}]
\end{equation}
with the notations
\begin{equation}
I_{\Omega}(u,g)=\int_{\Omega}\int_{\Omega}\int_{\Omega}
u(y)g(|y-z|)g(|z-x|)u(x)dzdydx
\end{equation}
and
\begin{equation}
J_{\Omega}(u,g)=\int_{\Omega}\int_{\Omega}\int_{\Omega}
u(y)g(|z-x|)u(x)dzdydx.
\end{equation}
Since $q_{R}$ is positive and nonincreasing and noting that
$$ \int_{\Omega}u(x)dx=\int_{\Omega^{\star}}u^{\star}(x)dx$$
we obtain
$$I_{\Omega}(u,q_{R})\leq I_{\Omega^{\star}}(u^{\star},q_{R}),$$
$$J_{\Omega}(u,q_{R})\leq J_{\Omega^{\star}}(u^{\star},q_{R}),$$
by the Brascamp-Lieb-Luttinger rearrangement inequality.
Therefore,
\begin{multline}
I_{\Omega}(u,h_{2})=\lim_{R\rightarrow\infty}[I_{\Omega}(u,q_{R})
+2h_{2}(R)J_{\Omega}(u,q_{R})+h_{2}^{2}(R)
(\int_{\Omega}u(x)dx)^{2}]\leq\\
\lim_{R\rightarrow\infty}[I_{\Omega^{\star}}(u^{\star},q_{R})
+2h_{2}(R)J_{\Omega^{\star}}(u^{\star},q_{R})+h_{2}^{2}(R)
(\int_{\Omega^{^{\star}}}u^{\star}(x)dx)^{2}]=
I_{\Omega^{\star}}(u^{\star},h_{2}).
\end{multline}
This proves the inequality \eqref{nnn1}.
Similarly, now let us show that the inequality \eqref{nnn2} is valid.
We have
\begin{equation}
\widetilde{I}_{\Omega}(u,h_{2})=\lim_{R\rightarrow\infty}[\widetilde{I}_{\Omega}(u,q_{R})
+h_{2}(R)\widetilde{J}_{\Omega}(u,h_{1})]
\end{equation}
with the notations
\begin{equation}
\widetilde{I}_{\Omega}(u,g)=\int_{\Omega}\int_{\Omega}\int_{\Omega}
u(y)h_{1}(|y-z|)g(|z-x|)u(x)dzdydx
\end{equation}
and
\begin{equation}
\widetilde{J}_{\Omega}(u,h_{1})=\int_{\Omega}\int_{\Omega}\int_{\Omega}
u(y)h_{1}(|y-x|)u(x)dzdydx.
\end{equation}
Since both $q_{R}$ and $h_{1}$ are positive and nonincreasing
\begin{equation}\widetilde{I}_{\Omega}(u,q_{R})\leq \widetilde{I}_{\Omega^{\star}}(u^{\star},q_{R}),\end{equation}
\begin{equation}\widetilde{J}_{\Omega}(u,h_{1})\leq \widetilde{J}_{\Omega^{\star}}(u^{\star},h_{1}),\end{equation}
by the Brascamp-Lieb-Luttinger rearrangement inequality.
Therefore, we obtain
$$\widetilde{I}_{\Omega}(u,h_{2})=\lim_{R\rightarrow\infty}[\widetilde{I}_{\Omega}(u,q_{R})
+h_{2}(R)\widetilde{J}_{\Omega}(u,h_{1})]\leq$$
$$\lim_{R\rightarrow\infty}[\widetilde{I}_{\Omega^{\star}}(u^{\star},q_{R})
+h_{2}(R)\widetilde{J}_{\Omega^{\star}}(u^{\star},h_{1})]=
\widetilde{I}_{\Omega^{\star}}(u^{\star},h_{2}).$$
This proves the inequality \eqref{nnn2}.
\end{proof}

Lemma \ref{lem:Riesz} implies the following analogy of the P{\'o}lya theorem \cite{Po} for the operator $\mathcal{L}_\Omega$.

\begin{thm} \label{THM:2}
The equilateral triangle  $\Delta$ centred at the origin is a minimiser of the first characteristic number of the logarithmic potential $\mathcal{L}_\Omega$ among all triangles of a given area, i.e.
$$\frac{1}{|\mu_{1}(\Omega)|}\leq \frac{1}{|\mu_{1}(\Delta)|}$$
for any triangle $\Omega\subset \mathbb R^{2}$ with $|\Omega|=|\Delta|.$
\end{thm}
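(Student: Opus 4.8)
The plan is to follow the strategy of the proof of Theorem \ref{THM:1} verbatim, replacing the symmetric-decreasing rearrangement by Steiner symmetrization and then iterating. First I would record the monotonicity of the relevant spectral quantity under a single Steiner symmetrization. By Lemma \ref{lem:1} the first characteristic number $\mu_{1}(\Omega)$ of $\mathcal{L}_\Omega$ is simple and its eigenfunction $u_{1}$ may be taken nonnegative, so its Steiner symmetrization $u_{1}^{\star}$ is a legitimate nonnegative trial function on $\Omega^{\star}$. Steiner symmetrization preserves the distribution function, hence $\|u_{1}^{\star}\|_{L^{2}(\Omega^{\star})}=\|u_{1}\|_{L^{2}(\Omega)}$, and it preserves Lebesgue measure, so $|\Omega^{\star}|=|\Omega|$. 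Combining Lemma \ref{lem:Riesz} for the twice-iterated logarithmic kernel with the variational principle $1/\mu_{1}^{2}(\Omega^{\star})=\sup_{v\neq 0}(\mathcal{L}^{2}_{\Omega^{\star}}v,v)/\|v\|^{2}$, exactly as in the proof of Theorem \ref{THM:1}, gives $\mu_{1}^{2}(\Omega)\geq \mu_{1}^{2}(\Omega^{\star})$, that is $1/|\mu_{1}(\Omega)|\leq 1/|\mu_{1}(\Omega^{\star})|$. Thus each Steiner symmetrization can only increase $1/|\mu_{1}|$ while keeping the area fixed.

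The second ingredient is purely geometric: the Steiner symmetrization of a triangle with respect to the perpendicular bisector of one of its sides is again a triangle, namely the isosceles triangle having that side as base and the same area (the horizontal cross-sections vary linearly in length, and centering them on the axis produces an isosceles triangle). This stability of the class of triangles under Steiner symmetrization is precisely what fails for polygons with $n\geq 5$ sides, and it is the reason the argument is confined to $n=3$. I would then invoke the classical P\'olya fact that, by cycling the symmetrization through the three sides in a suitable order, one produces a sequence of triangles $\Omega=T_{0},T_{1},T_{2},\ldots$, all of area $|\Delta|$, converging (after suitable rigid motions, say in the Hausdorff distance) to the equilateral triangle $\Delta$. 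Along this sequence the numbers $1/|\mu_{1}(T_{k})|$ are nondecreasing by the first step.

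Finally I would pass to the limit. Realising every $\mathcal{L}_{T_{k}}$ as the operator on $L^{2}(B)$ for a fixed large ball $B\supset\Delta$, with kernel $\frac{1}{2\pi}\ln\frac{1}{|x-y|}\chi_{T_{k}}(x)\chi_{T_{k}}(y)$, the convergence $T_{k}\to\Delta$ (equivalently $|T_{k}\triangle\Delta|\to 0$) forces convergence of these kernels in $L^{2}(B\times B)$, hence $\mathcal{L}_{T_{k}}\to\mathcal{L}_{\Delta}$ in the Hilbert--Schmidt norm and in particular $|\mu_{1}(T_{k})|\to|\mu_{1}(\Delta)|$. Combining this with the monotonicity $1/|\mu_{1}(\Omega)|=1/|\mu_{1}(T_{0})|\leq 1/|\mu_{1}(T_{k})|\to 1/|\mu_{1}(\Delta)|$ yields $1/|\mu_{1}(\Omega)|\leq 1/|\mu_{1}(\Delta)|$, which is the assertion.

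The routine part is the monotonicity step, which is a repetition of Theorem \ref{THM:1}. The genuine obstacle is the geometric convergence of iterated Steiner symmetrizations to the equilateral triangle together with the accompanying spectral continuity: one must order the axes so that the triangles stay uniformly non-degenerate (the area constraint alone does not rule out a long thin limit), confirm that the limit is genuinely equilateral rather than merely isosceles, and justify continuity of $\mu_{1}$ under the resulting domain convergence despite the logarithmic singularity of the kernel on the boundary. An alternative route that avoids the convergence analysis would be to first prove existence of a non-degenerate maximizing triangle and then use a rigidity (equality-case) analysis of Lemma \ref{lem:Riesz} to force the maximizer to be isosceles with respect to each of its three sides, hence equilateral; but this merely shifts the difficulty onto the equality case of the rearrangement inequality.
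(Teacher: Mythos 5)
Your proposal is correct and follows essentially the same route as the paper: monotonicity of $1/|\mu_{1}|$ under a single Steiner symmetrization, obtained from Lemma \ref{lem:Riesz} applied to the positive first eigenfunction together with the variational principle for $\mathcal{L}^{2}$, combined with the classical fact that iterating Steiner symmetrizations about the perpendicular bisectors of the sides drives a triangle of fixed area to the equilateral one. The only difference is that you make explicit the limit passage (Hilbert--Schmidt convergence of the kernels under $|T_{k}\triangle\Delta|\to 0$, hence continuity of the operator norm), which the paper leaves implicit by writing the rearrangement inequality directly with $\Delta$ on the right-hand side.
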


\begin{rem}\label{REM:op2}
In other words Theorem \ref{THM:2} says that the operator norm of
$\mathcal{L}_\Omega$ is maximised in an equilateral triangle among all triangles of a given area.
\end{rem}

\begin{proof} [Proof of Theorem \ref{THM:2}]
By Theorem \ref{THM:1} and Lemma \ref{lem:1} the first characteristic number $\mu_{1}$ of the operator $\mathcal{L}_\Omega$ is positive and simple; the corresponding
eigenfunction $u_{1}$ can be chosen positive in $\Omega$.
Using the fact that by applying
a sequence of the Steiner symmetrizations with respect to the mediator of each side, a given triangle converges to an equilateral one
(see e.g. Figure 3.2. in \cite{He}), from \eqref{nn1} we obtain

\begin{multline}
\int_{\Omega}\int_{\Omega}\int_{\Omega}
u_{1}(y)\frac{1}{2\pi}\ln\frac{1}{|y-z|}
\frac{1}{2\pi}\ln\frac{1}{|z-x|}u_{1}(x)dzdydx\leq\\
\int_{\Delta}
\int_{\Delta}\int_{\Delta}
u_{1}^{\star}(y)\frac{1}{2\pi}\ln\frac{1}{|y-z|}
\frac{1}{2\pi}\ln\frac{1}{|z-x|}u_{1}^{\star}(x)dzdydx.\label{nn4}
\end{multline}

Therefore, from \eqref{nn4} and the variational principle for the positive operator $\mathcal{L}^{2}_\Delta$, we get
$$
\mu_{1}^{2}(\Omega) =\frac{\int_{\Omega}|u_{1}(x)|^{2}dx}{\int_{\Omega}\int_{\Omega}\int_{\Omega}
u_{1}(y)\frac{1}{2\pi}\ln\frac{1}{|y-z|}
\frac{1}{2\pi}\ln\frac{1}{|z-x|}u_{1}(x)dzdydx}\geq$$
$$\frac{\int_{\Delta}|u^{\star}_{1}(x)|^{2}dx}
{\int_{\Delta}\int_{\Delta}\int_{\Delta}
u^{\star}_{1}(y)\frac{1}{2\pi}\ln\frac{1}{|y-z|}
\frac{1}{2\pi}\ln\frac{1}{|z-x|}u^{\star}_{1}(x)dzdydx}\geq
$$
$$
\inf_{v\in
L^{2}(\Delta)}\frac{\int_{\Delta}|v(x)|^{2}dx}{\int_{\Delta}\int_{\Delta}\int_{\Delta}
v(y)\frac{1}{2\pi}\ln\frac{1}{|y-z|}
\frac{1}{2\pi}\ln\frac{1}{|z-x|}v(x)dzdydx}=\mu_{1}^{2}(\Delta).
$$
Here we have used the fact that the Steiner symmetrization preserves the $L^{2}$-norm.
\end{proof}

\begin{proof} [Proofs of Theorem \ref{THM:tri} and Theorem \ref{THM:tri2}]
The proofs of Theorem \ref{THM:tri} and Theorem \ref{THM:tri2} rely on the same techniques as the proofs of Theorem \ref{THM:main} and Theorem \ref{THM:main2} with
the difference that now the Steiner symmetrization is used.
Since the Steiner symmetrization has the same property \eqref{eq16} (cf. Lemma 3.2 in \cite{BLL})
as the symmetric-decreasing rearrangement, it is
clear that any Steiner symmetrization increases (or at least does not decrease) the Schatten $p$-norms
 for even integers $p\geq 2$.
Thus, for the proof we only need to recall the fact that a sequence of Steiner symmetrizations with respect to
the mediator of each side, a given triangle converges to an equilateral one.
The rest of the proof is exactly the same as the proofs of Theorem \ref{THM:main} and Theorem \ref{THM:main2}.
\end{proof}

\begin{rem} A sequence of three Steiner symmetrizations allows us to transform any quadrilateral
into a rectangle (see Figure 3.3. in \cite{He}). Therefore, it suffices to look at the maximization problem among rectangles for $\mathcal{P}_{4}$. Unfortunately, for $\mathcal{P}_{5}$ (pentagons and others), the Steiner symmetrization
increases, in general, the number of sides. This prevents us from using the same technique for general polygons.
\end{rem}


\begin{thebibliography}{AKL92}

\bibitem[AK92]{AK}
J.~Arazy and D.~Khavinson.
\newblock Spectral estimates of {C}auchy's transform in {$L^2(\Omega)$}.
\newblock {\em Integral Equations Operator Theory}, 15(6):901--919, 1992.

\bibitem[AKL92]{AKL}
J.~M. Anderson, D.~Khavinson, and V.~Lomonosov.
\newblock Spectral properties of some integral operators arising in potential
  theory.
\newblock {\em Quart. J. Math. Oxford Ser. (2)}, 43(172):387--407, 1992.

\bibitem[Ban80]{Ban80}
C.~Bandle.
\newblock {\em Isoperimetric inequalities and applications}, volume~7 of {\em
  Monographs and Studies in Mathematics}.
\newblock Pitman (Advanced Publishing Program), Boston, Mass.-London, 1980.

\bibitem[BLL74]{BLL}
H.~J. Brascamp, E.~H. Lieb, and J.~M. Luttinger.
\newblock A general rearrangement inequality for multiple integrals.
\newblock {\em J. Funct. Anal.}, 17:227--237, 1974.

\bibitem[BLL12]{Ben}
R.~D. Benguria, H.~Linde, and B.~Loewe.
\newblock Isoperimetric inequalities for eigenvalues of the {L}aplacian and the
  {S}chr{\"o}dinger operator.
\newblock {\em Bull. Math. Sci.}, 2(1):1--56, 2012.

\bibitem[BS77]{BS}
M.~{\v{S}}. Birman and M.~Z. Solomjak.
\newblock Estimates for the singular numbers of integral operators.
\newblock {\em Uspehi Mat. Nauk}, 32(1(193)):17--84, 271, 1977.


\bibitem[CL92]{CL}
E.~Carlen and M.~Loss.
\newblock Competing symmetries, the logarithmic {HLS} inequality and {O}nofri's
  inequality on {$S^n$}.
\newblock {\em Geom. Funct. Anal.}, 2(1):90--104, 1992.

\bibitem[DR14]{DR}
J.~Delgado and M.~Ruzhansky.
\newblock Schatten classes on compact manifolds: kernel conditions.
\newblock {\em J. Funct. Anal.}, 267(3):772--798, 2014.

\bibitem[Dos12]{D12}
M.~R. Dostanic.
\newblock The asymptotic behavior of the singular values of the convolution operators with kernels whose Fourier transform are rational.
\newblock {\em J. Math. Anal. and App.}, 295(2):496--500, 2012.

\bibitem[GK69]{GK}
I.~C. Gohberg and M.~G. Kre{\u\i}n.
\newblock {\em Introduction to the theory of linear nonselfadjoint operators}.
\newblock Translated from the Russian by A. Feinstein. Translations of
  Mathematical Monographs, Vol. 18. American Mathematical Society, Providence,
  R.I., 1969.

\bibitem[Hen06]{He}
A.~Henrot.
\newblock {\em Extremum problems for eigenvalues of elliptic operators}.
\newblock Frontiers in Mathematics. Birkh{\"a}user Verlag, Basel, 2006.

\bibitem[Kac51]{Kac1}
M.~Kac.
\newblock On some connections between probability theory and differential and
  integral equations.
\newblock In {\em Proceedings of the {S}econd {B}erkeley {S}ymposium on
  {M}athematical {S}tatistics and {P}robability, 1950}, pages 189--215.
  University of California Press, Berkeley and Los Angeles, 1951.

\bibitem[Kac70]{Kac3}
M.~Kac.
\newblock On some probabilistic aspects of classical analysis.
\newblock {\em Amer. Math. Monthly}, 77:586--597, 1970.

\bibitem[Kac80]{Kac2}
M.~Kac.
\newblock {\em Integration in function spaces and some of its applications}.
\newblock Accademia Nazionale dei Lincei, Pisa, 1980.
\newblock Lezioni Fermiane. [Fermi Lectures].

\bibitem[KS09]{KS1}
T.~Sh. Kal'menov and D.~Suragan.
\newblock On spectral problems for the volume potential.
\newblock {\em Doklady Mathematics}, 80 (2):646--649, 2009.

\bibitem[Lan72]{Lan66}
N.~S. Landkof.
\newblock {\em Foundations of modern potential theory}.
\newblock Springer-Verlag, New York-Heidelberg, 1972.
\newblock Translated from the Russian by A. P. Doohovskoy, Die Grundlehren der
  mathematischen Wissenschaften, Band 180.

\bibitem[LL01]{LL}
E.~H. Lieb and M.~Loss.
\newblock {\em Analysis}, volume~14 of {\em Graduate Studies in Mathematics}.
\newblock American Mathematical Society, Providence, RI, second edition, 2001.

\bibitem[Lut73]{Lu}
J.~M. Luttinger.
\newblock Generalized isoperimetric inequalities.
\newblock {\em Proc. Nat. Acad. Sci. U.S.A.}, 70:1005--1006, 1973.

\bibitem[MS15]{MS}
Y.~Miyanishi and T.~Suzuki.
\newblock Eigenvalues and eigenfunctions of double layer potentials.
\newblock {\em arXiv:1501.03627}, 2015.

\bibitem[P{\'o}l55]{Po}
G.~P{\'o}lya.
\newblock On the characteristic frequencies of a symmetric membrane.
\newblock {\em Math. Z.}, 63:331--337, 1955.

\bibitem[PS51]{Po1}
G.~P{\'o}lya and G.~Szeg{\"o}.
\newblock {\em Isoperimetric {I}nequalities in {M}athematical {P}hysics}.
\newblock Annals of Mathematics Studies, no. 27. Princeton University Press,
  Princeton, N. J., 1951.

\bibitem[RS15]{Ruzhansky-Suragan:UMN}
M.~Ruzhansky and D.~Suragan.
\newblock On {S}chatten norms of integral operators of convolution type.
\newblock {\em to appear in Russ. Math. Surv.}, 2015.

\bibitem[RS16]{Ruzhansky-Suragan:TAMS}
M.~Ruzhansky and D.~Suragan.
\newblock Maximizers of {S}chatten norms of convolution type integral operators. \newblock {\em in preparation}.

\bibitem[Sai08]{Sa}
N.~Saito.
\newblock Data analysis and representation on a general domain using
  eigenfunctions of {L}aplacian.
\newblock {\em Appl. Comput. Harmon. Anal.}, 25(1):68--97, 2008.

\bibitem[Tro67]{Tr67}
J.~L. Troutman.
\newblock The logarithmic potential operator.
\newblock {\em Illinois J. Math.}, 11:365--374, 1967.

\bibitem[Tro69]{Tr69}
J.~L. Troutman.
\newblock The logarithmic eigenvalues of plane sets.
\newblock {\em Illinois J. Math.}, 13:95--107, 1969.

\bibitem[Vla84]{Vla}
V.~S. Vladimirov.
\newblock {\em Equations of mathematical physics}.
\newblock ``Mir'', Moscow, 1984.
\newblock Translated from the Russian by Eugene Yankovsky [E. Yankovski{\u\i}].

\end{thebibliography}

\end{document}